\newtheorem{theorem}{Theorem}[section]
\newtheorem{proposition}[theorem]{Proposition}
\newtheorem{lemma}[theorem]{Lemma}
\newtheorem{corollary}[theorem]{Corollary}
\theoremstyle{definition}
\newtheorem{example}[theorem]{Example}
\newtheorem{problem}[theorem]{Problem}
\newtheorem{question}[theorem]{Question}
\newtheorem{remark}[theorem]{Remark}
\newtheorem*{proposition3}{Proposition \ref{3.3}}
\newcommand{\NN}{ \ensuremath{\mathbb{N}}}
\newcommand{\ZZ}{ \ensuremath{\mathbb{Z}}}
\newcommand{\RR}{ \ensuremath{\mathbb{R}}}
\newcommand{\reg}{\mathop{\mathrm{reg}}}
\newcommand{\Tor}{\ensuremath{\mathrm{Tor}}\hspace{1pt}}
\newcommand{\Image}{\mathop{\mathrm{Image}}\hspace{1pt}}
\newcommand{\aaa}{\mathbf{a}}
\newcommand{\bb}{\mathbf{b}}
\newcommand{\st}{\mathrm{st}}
\newcommand{\sym}{\ensuremath{\mathfrak{S}}}
\def\cocoa{{\hbox{\rm C\kern-.13em o\kern-.07em C\kern-.13em o\kern-.15em A}}}
\newcommand{\kk}{\Bbbk}
\newcommand{\II}{\mathcal{I}}
\newcommand{\LL}{\mathcal{J}}
\newcommand{\pd}{\mathop{\mathrm{pd}}}
\newcommand{\ua}{{\underline {\mathbf{a}}}}
\newcommand{\supp}{\mathrm{supp}}
\begin{document}

\title[symmetric monomial ideals]
%{Betti tables of symmetric monomial ideals}
{Betti tables of monomial ideals fixed by permutations of the variables}

\author{Satoshi Murai}
\address{
Satoshi Murai,
Department of Mathematics
Faculty of Education
Waseda University,
1-6-1 Nishi-Waseda, Shinjuku, Tokyo 169-8050, Japan}
\email{s-murai@waseda.jp}

%\author{Another Author}
%\address{
%}
%\email{}

%Keyword and Subject Classes (if needed)
%\keywords{}
%\subjclass[2000]{}
%\dedicatory{Dedicated to on the occasion of his birthday}

\begin{abstract}
Let $S_n$ be a polynomial ring with $n$ variables over a field and $\{I_n\}_{n \geq 1}$ a chain of ideals such that each $I_n$ is a monomial ideal of $S_n$ fixed by permutations of the variables.
In this paper, we present a way to determine all nonzero positions of Betti tables of $I_n$ for all large intergers $n$ from the $\mathbb Z^m$-graded Betti table of $I_m$ for some integer $m$.
Our main result shows that the projective dimension and the regularity of $I_n$ eventually become linear functions on $n$,
confirming a special case of conjectures posed by Le, Nagel, Nguyen and R\"omer.
\end{abstract}

\maketitle

\section{Introduction}

Recently,
ideals fixed by an action of the infinite symmetric group in a polynomial ring with infinitely many variables attract the interest of researchers in various areas of mathematics.
For example,
a finite generation property of such ideals up to symmetry 
has been interested in algebraic statistics
(see \cite{AH,HS} and a survey \cite{Dr}).
From representation theory point of view, a study of such ideals can be considered as a special instance of twisted commutative algebra \cite{SS,SS2} and FI-modules \cite{EFS}.
In this paper,
motivated by commutative algebra questions posed by Le, Nagel, Nguyen and R\"omer \cite{LNNR1,LNNR2},
we study Betti tables of monomial ideals fixed by permutations of the variables.

Let $S_\infty=\kk[x_j: j \geq 1]$ be a polynomial ring over a field $\kk$ with infinitely many variables $x_1,x_2,\dots$ and let $S_n=\kk[x_1,\dots,x_n]$.
Consider the action of the infinite symmetric group $\sym_\infty$ to $S_\infty$ defined by $\sigma(x_i)=x_{\sigma(i)}$
for any $\sigma \in \sym_\infty$,
and consider an ideal $\II \subset S_\infty$ which is fixed by the action of $\sym_\infty$, that is, satisfies $\sigma(\II)=\II$ for any $\sigma \in \sym_\infty$.
Given such an ideal $\II$, by setting $I_n=\II \cap S_n$
we obtain a chain of ideals
\begin{align}
\label{chain}
I_1 \subset I_2 \subset I_3 \subset \cdots
\end{align}
such that each $I_n$ is fixed by the action of the $n$th symmetric group $\sym_n$.
About such a chain,
a natural interesting algebraic problem is to understand asymptotic behavior of $I_n$.
Indeed, Le, Nagel, Nguyen and R\"omer \cite{LNNR1,LNNR2} recently studied
asymptotic behavior of projective dimension and regularity of $I_n$.
They give certain linear bounds for these invariants and conjectured that they become linear functions on $n$ for $n \gg 0$.
The above setting of considering $\sym_\infty$-invariant ideals in $S_\infty$ is actually a special case of their problems
since they actually discussed chains $\{I_n\}$
of ideals such that each $I_n$ is an ideal of $\kk[x_{i,j}:1\leq i \leq c, 1 \leq j \leq n]$. But even for chains arising from ideals in $S_\infty$ asymptotic behavior of these homological invariants are still not understood very well.
The purpose of this paper is to explain that,
in the special case when $\II$ is a monomial ideal in $S_\infty$,
the situation becomes quite simple and one can describe asymptotic behavior of not only projective dimension and regularity but also the shape of the Betti table.

To explain our main result,
let us first introduce a few notation and give one simple example.
A {\bf partition of length $k$} is a sequence $\lambda=(\lambda_1,\dots,\lambda_k)$ of positive integers satisfying $\lambda_1 \geq \cdots \geq  \lambda_k$.
For a vector $\aaa=(a_1,\dots,a_k) \in \ZZ^k_{\geq 0}$,
we write $x^\aaa=x_1^{a_1}x_2^{a_2} \cdots x_k^{a_k}$.
We say that an ideal $\II \subset S_\infty$ is {\bf symmetric} if it is fixed by the action of $\sym_\infty$.
We can consider that a symmetric monomial ideal $\II \subset S_\infty$ is generated by a finite number of partitions in the sense that there always exist partitions $\lambda(1),\dots,\lambda(t)$ such that
$$\II=\big( \sigma(x^{\lambda(k)}):k=1,2,\dots t,\ \sigma \in \sym_\infty \big)$$
(see Section 2).
The {$(i,j)$th \bf graded Betti number} of a homogeneous ideal $I \subset S_n$
is the number $\beta_{i,j}(I)=\dim_\kk \Tor_i(I,\kk)_j$.
To write down graded Betti numbers,
we use the {\bf Betti table} of $I$, which is the table whose $(i,j)$th entry is the number $\beta_{i,i+j}(I)$.
%Graded Betti numbers and Betti tables are well-studied object in commutative algebra containing a number of algebraic information such as projective dimension, regularity, Hilbert function, etc.

Now let us give a simple example.
Let $\LL \subset S_\infty$ be the symmetric monomial ideal generated by partitions $(5,1)$ and $(2,2)$,
and let $J_n=\LL\cap S_n$ for $n \geq 1$.
Thus $J_n$ is the monomial ideal of $S_n$ generated by the $\sym_n$-orbits of $x_1^5x_2$ and $x_1^2x_2^2$.
In Figures \ref{fig1} and \ref{fff2} below,
we list Betti tables of $J_n$ when $n=2,3,4,5,6$ and $9$ computed by the computer algebra system Macaulay2 \cite{M2}.

\begin{figure}[h]
\includegraphics[scale=0.35]{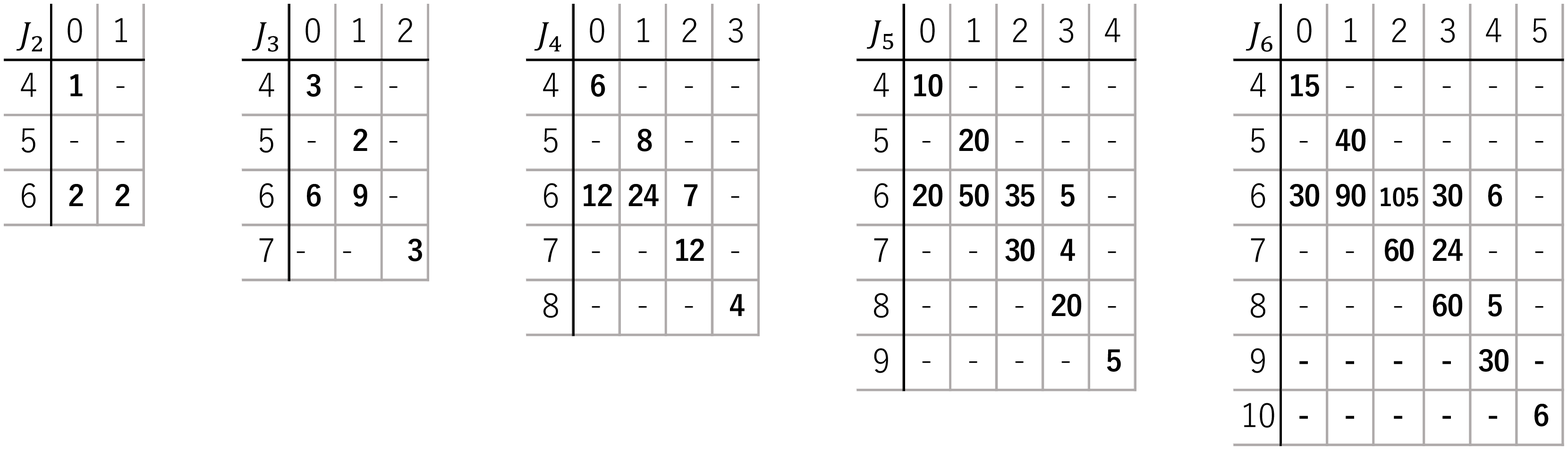}
\caption{Betti tables of $J_n$ for $n=2,3,4,5,6$.}
\label{fig1}
\end{figure}

\begin{figure}[h]
\includegraphics[scale=0.3]{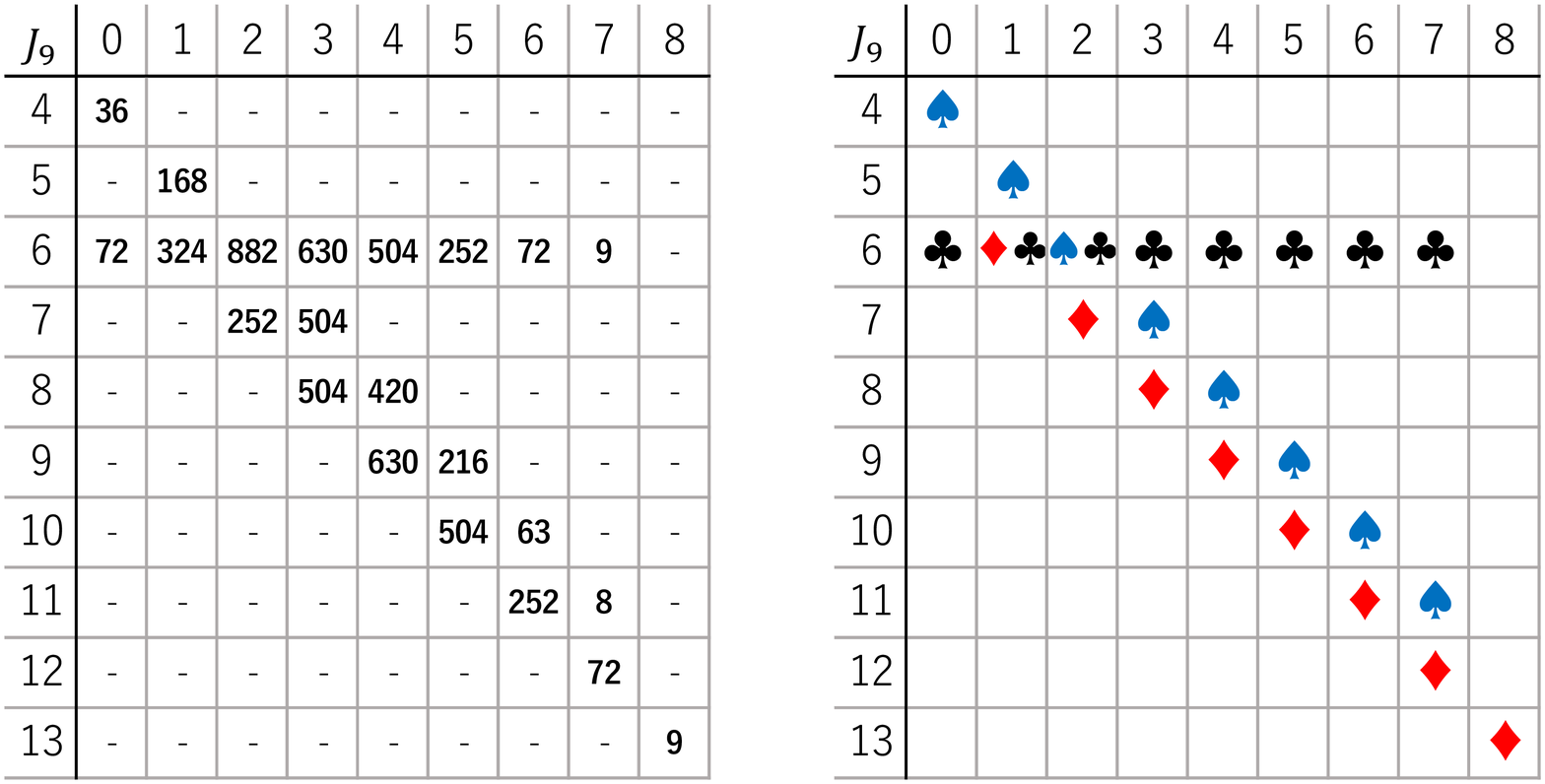}
\caption{Betti table of $J_9$. Each suit represents a line segment.}
\label{fff2}
\end{figure}

\noindent
Looking these tables, one can find that the shape of the nonzero positions in the Betti table of $J_n$ looks like a union of ``line segment of length $n-2$".
Here, for ``line segment of length $\ell$", we mean a set of the form 
\[
\mathcal L\big((i,j),c,\ell\big)=\{(i+k,j+ck) \in \ZZ^2:k=0,1\dots,\ell\}\]
for some integers $i,j,c,\ell \in \ZZ_{\geq 0}$
(one may think that, in the symbol $\mathcal L\big((i,j),c,\ell\big)$,
the first entry $(i,j)$ represents the starting position, $c$ represents the slope, and $\ell$ represents the length). 
Indeed, for our example of the ideal $\LL$,
it is not hard to show (see \S 2.4 later)
\begin{align}
\label{0-1}
&\{(i,j)\in \ZZ^2: \beta_{i,i+j}(J_n) \ne 0\}
\\
\nonumber
&= \mathcal L\big((0,4),1,n-2\big)
\cup
\mathcal L\big((0,6),0,n-2\big)
\cup 
\mathcal L\big((1,6),1,n-2\big).
\end{align}
See the right table in Figure \ref{fff2}.
The main result of this paper is the following result which shows
that this phenomenon always happens if we ignore some positions in low homological degrees.

\begin{theorem}
\label{1.2}
Let $\II \subset S_\infty$ be a symmetric monomial ideal generated by partitions of length $\leq m$ and let $I_n=\II \cap S_n$ for $n \geq 1$.
Then there is a finite set $D \subset \{0,1,\dots,m-1\} \times \ZZ_{\geq 0}^2$ such that for any integer $n \geq m$ we have
\begin{align*}
\big\{(i,j): \beta_{i,i+j}(I_n) \ne 0\big\}
=
\left(\bigcup_{(i,j,c) \in D} \!\! \mathcal L\big((i,j),c,n-m\big)\!\!\right)
\!\cup \big\{ (i,j): \beta_{i,i+j}(I_{m-1}) \ne 0\big\}.
\end{align*}
\end{theorem}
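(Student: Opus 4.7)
The plan is to exploit the $\sym_n$-equivariance of $I_n$ to organize its multigraded Betti numbers by partition shape, and then show that the nonzero positions of the Betti table are governed by a finite amount of combinatorial data.

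First, I would reduce to a shape invariant. Since $I_n$ is $\sym_n$-invariant, its minimal graded free resolution can be taken $\sym_n$-equivariantly, so $\beta_{i,\aaa}(I_n) = \beta_{i,\sigma(\aaa)}(I_n)$ for every $\sigma \in \sym_n$; in particular $\beta_{i,\aaa}(I_n)$ depends only on the partition $\mu$ obtained by sorting the entries of $\aaa$ in decreasing order. By the standard simplicial description of multigraded Betti numbers of monomial ideals (Hochster's formula), $\beta_{i,\mu}(I_n)$ is the dimension of a reduced homology group of a simplicial complex supported on $\supp(\mu)$, and this complex is determined purely by which divisors of $x^\mu$ lie in $I_n$. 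That divisibility data depends only on the generating partitions of $\II$, so the quantity $b_{i,\mu}:=\beta_{i,\mu}(I_n)$ is well-defined and independent of $n$ whenever $n\geq \ell(\mu)$. Hence the set of nonzero positions of the Betti table of $I_n$ equals $\{(i,|\mu|-i):b_{i,\mu}\neq 0,\ \ell(\mu)\leq n\}$. The standard dimension bound on a complex with $\ell(\mu)$ vertices also gives $i\leq \ell(\mu)-1$ whenever $b_{i,\mu}\neq 0$, which accounts for the bound $i_0\leq m-1$ on the first coordinate of $D$.

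Next, the combinatorial core of the proof is to decompose the set $\{(i,\mu):b_{i,\mu}\neq 0\}$ into finitely many ``families''. A family is indexed by a seed $(\mu^{(0)},i_0,s)$, where $\mu^{(0)}$ is a partition of length exactly $m$ with $b_{i_0,\mu^{(0)}}\neq 0$, $i_0\in\{0,1,\ldots,m-1\}$, and $s\geq 1$; its members are the pairs $(i_0+k,\mu^{(k)})$ for $k=0,1,2,\ldots$, where $\mu^{(k)}$ is obtained from $\mu^{(0)}$ by appending $k$ extra parts, each equal to $s$. Because appending one part of size $s$ raises $i$ by $1$ and $|\mu|$ by $s$, such a family contributes exactly the line segment $\mathcal L\bigl((i_0,\,|\mu^{(0)}|-i_0),\,s-1,\,n-m\bigr)$ to the Betti table of $I_n$ whenever $n\geq m$. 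Reading off the triples $(i_0,\,|\mu^{(0)}|-i_0,\,s-1)$ then produces the required finite set $D$. All remaining contributions — those coming from partitions of length $<m$ — are already present at $n=m-1$, so they land in the set $\{(i,j):\beta_{i,i+j}(I_{m-1})\neq 0\}$.

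The hard step is establishing the family decomposition above. This splits into two claims: (i) if $b_{i,\mu}\neq 0$ and $\ell(\mu)>m$, then $\mu$ is obtained from a partition $\mu'$ of length $\ell(\mu)-1$ by appending a part of a specific size $s$, and $b_{i-1,\mu'}\neq 0$; and (ii) once a seed and its size $s$ are fixed, every iterated extension carries a nonzero Betti number in the expected homological degree. To attack both claims I would track how the simplicial complex computing $b_{i,\mu}$ via Hochster's formula transforms under appending a coordinate to $\mu$: when the new exponent matches the smallest part of $\mu$ (a size dictated by the generating partitions of $\II$), the resulting complex undergoes a suspension-like operation that raises homology by exactly one, yielding $b_{i+1,\mu^{(k+1)}}\neq 0$; other choices of exponent leave the complex acyclic in the relevant degree. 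Finiteness of the seed set and rigidity of $s$ along a family both reduce to the hypothesis that $\II$ has only finitely many generating partitions, each of length $\leq m$. The most delicate point is verifying that the threshold is exactly $m$ — i.e., that no genuinely new families arise from seeds of length $>m$ — which is what ultimately forces the specific bound on the first coordinate of $D$ in the theorem.
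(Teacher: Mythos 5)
Your outline mirrors the paper's strategy in all structural respects: reduce to sorted partitions, invoke a rigidity constraint forcing the appended exponent to equal the smallest part $a_m$ (the paper's Proposition \ref{2.4}), establish a ``shift'' statement that appending one such part moves the nonzero position from homological degree $i$ to $i+1$ (the paper's Proposition \ref{3.3}), and assemble these into line segments. The arithmetic of the slope $s-1=a_m-1$ and the set $D$ is also correct. So this is essentially the paper's approach, not a different route.

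The genuine gap is the phrase ``a suspension-like operation that raises homology by exactly one.'' This is where the entire technical content of the theorem lives, and the heuristic is misleading. Write $\aaa=(a_1,\dots,a_t,b,\dots,b)\in\ZZ^n_{\geq 0}$ with $a_1\geq\cdots\geq a_t>b\geq 1$ and $\Delta=\Delta^{I_n}_\aaa$. The paper shows $\Delta^{I_{n+1}}_{(\aaa,b)}=\Delta^{[t+1]}\cup\cdots\cup\Delta^{[n]}\cup\Delta^{[n+1]}$, a union of cones over $\Delta$ whose apexes $t+1,\dots,n$ may already be vertices of $\Delta$; only the last apex $n+1$ is fresh. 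This is emphatically not a suspension, and there is no isomorphism of homology groups being claimed or proved. The biconditional $\widetilde H_{i-1}(\Delta)\neq 0 \Leftrightarrow \widetilde H_i(\Delta^{I_{n+1}}_{(\aaa,b)})\neq 0$ is obtained from a Mayer--Vietoris sequence whose essential input is the paper's Lemma \ref{3.2}: for a complex $\Delta$ fixed by permutations on $[r]$, $\widetilde H_i(\Delta^{[1]}\cup\cdots\cup\Delta^{[r]})\neq 0$ implies $\widetilde H_{i-1}(\Delta)\neq 0$. That lemma is proved by a delicate induction on $r$ involving nested Mayer--Vietoris sequences and the identification of the intersection $\bigl(\Delta^{[1]}\cup\cdots\cup\Delta^{[r-1]}\bigr)\cap\Delta^{[r]}$ in terms of stars, and it is the only place where the $\sym$-invariance of $\Delta$ is used in an essential way. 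Without an argument of comparable strength, your claim (i) is unsupported; the suspension picture gives no traction because the apexes are not all external. You also overclaim slightly with ``raises homology by exactly one'': only non-vanishing equivalence is true, not an isomorphism. Finally, a minor misattribution: the bound $i_0\leq m-1$ on the first coordinate of $D$ comes from the elementary fact that $\beta_{i,\aaa}(I)\neq 0$ forces $i<|\supp(\aaa)|$ applied to elements of $\mathcal F(I_m)$ (Lemma \ref{2.3}(ii)), not from the ``no new families from length $>m$'' principle, which is instead what the cone argument of Proposition \ref{2.4} delivers.
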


To prove Theorem \ref{1.2},
we actually prove a $\ZZ^n$-graded version of the theorem
which enable us to determine all nonzero positions of the ($\ZZ^n$-graded) Betti table of $I_n$ from the $\ZZ^m$-graded Betti table of $I_m$.
We do not explain this result here since the statement is not very simple.
See Theorem \ref{3.4} later.

Theorem \ref{1.2} immediately proves the following corollary about projective dimension and regularity,
which confirms a special case of conjectures given by Le, Nagel, Nguyen and R\"omer in \cite[Conjecture 1.1]{LNNR1} and \cite[Conjecture 1.2]{LNNR2}.

%Recall that for a homogeneous ideal $I \subset S_n$,
%its projective dimension is
%$$\pd(I)=\max\{i: \beta_{i,j}(I) \ne 0 \mbox{ for some }j\}$$
%and its (Castelnuovo-Mumford) regularity is
%$$\reg(I)=\max\{j: \beta_{i,i+j}(I) \ne 0 \mbox{ for some }i\}.$$

\begin{corollary}
\label{1.3}
Let $\II \subset S_\infty$ be a nonzero proper symmetric monomial ideal and let $I_n=\II \cap S_n$ for $n \geq 1$. There are integers $D,W,C$ such that
\[\pd(I_n)=n-D \mbox{ and }\reg(I_n)=Wn+C \mbox{ for }n \gg 0.\]
\end{corollary}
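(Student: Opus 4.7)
The plan is to apply Theorem~\ref{1.2} as a black box and read off the asymptotics of $\pd(I_n)$ and $\reg(I_n)$ directly from the line-segment description of the Betti table. Fix $m$ so that $\II$ is generated by partitions of length $\leq m$, and let $\mathcal D \subset \{0,\dots,m-1\} \times \ZZ_{\geq 0}^2$ be the finite set produced by Theorem~\ref{1.2}. For $n \geq m$ the nonzero positions of the Betti table of $I_n$ consist of the bounded set $A := \{(i,j) : \beta_{i,i+j}(I_{m-1}) \neq 0\}$ together with the union $\bigcup_{(a,b,c) \in \mathcal D} \mathcal L\big((a,b),c,n-m\big)$.

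Both invariants are extremal coordinates of nonzero positions. The segment $\mathcal L\big((a,b),c,n-m\big)$ reaches first coordinate $a + (n-m)$ and second coordinate $b + c(n-m)$ at its upper endpoint. Setting $i^* = \max\{a : (a,b,c) \in \mathcal D\}$, I would obtain
\[
\pd(I_n) = \max\big(\pd(I_{m-1}),\, n - m + i^*\big),
\]
which for $n \gg 0$ equals $n - (m - i^*)$; this identifies the constant $D$ of the corollary as $m - i^*$. For regularity, let $c^* = \max\{c : (a,b,c) \in \mathcal D\}$ and $b^* = \max\{b : (a,b,c^*) \in \mathcal D\}$. For large $n$ the slope-$c^*$ segments dominate every other segment and the bounded set $A$, yielding
\[
\reg(I_n) = c^*\, n + (b^* - c^* m),
\]
so $W = c^*$ and $C = b^* - c^* m$; when $c^* = 0$ the regularity is eventually constant and $W = 0$ works.

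The main obstacle will be to guarantee that $\mathcal D \neq \emptyset$, since otherwise Theorem~\ref{1.2} forces $\pd(I_n) = \pd(I_{m-1})$ to be constant, incompatible with the linear form $n - D$. I would rule this out via a height argument. Because $\II$ is nonzero and proper, pick a generating partition $\lambda$ of $\II$ and let $k \geq 1$ be its length. A monomial prime $P = (x_{i_1},\dots,x_{i_r}) \subset S_n$ contains $I_n$ only if $\{i_1,\dots,i_r\}$ meets every $k$-subset of $\{1,\dots,n\}$, since the orbit $\{\sigma(x^\lambda) : \sigma \in \sym_n\}$ realizes every such $k$-subset as a support. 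This forces $r \geq n - k + 1$, so $\mathrm{ht}(I_n) \geq n - k + 1$, and hence $\pd(I_n) \geq \mathrm{ht}(I_n) - 1 \geq n - k$, which tends to infinity and forbids $\mathcal D = \emptyset$.
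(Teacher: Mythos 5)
Your proof is correct and takes essentially the same high-level route as the paper: read off the extremal coordinates from the line-segment description in Theorem~\ref{1.2}. The one place where you add content is the nonemptiness of the set $\mathcal D$. The paper's phrase ``immediately proves'' glosses over this (the intended observation being that if $m$ is taken to be the \emph{maximal} length of a partition in $\Lambda(\II)$, then that length-$m$ partition $\lambda$ gives $(0,\lambda)\in\mathcal F(I_m)$ and hence $\mathcal D\neq\emptyset$), and the paper later proves the stronger fact $\pd(I_{n+1})=\pd(I_n)+1$ for $n\geq m$ (Corollary~\ref{3.9}) by means of a Mayer--Vietoris argument (Proposition~\ref{3.8}). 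You instead rule out $\mathcal D=\emptyset$ by a Krull-dimension bound: since $\II$ is nonzero proper and generated by partitions, every minimal monomial prime over $I_n$ has height $\geq n-k+1$ for $k$ the length of any generating partition, and Auslander--Buchsbaum then gives $\pd(I_n)\geq n-k$. This is a valid and clean alternative that is more elementary than Proposition~\ref{3.8}, and it is in fact the same dimension-theoretic observation the paper uses later in Corollary~\ref{3.10}. What your argument does not recover, of course, is the refined information the paper extracts from the homological approach: the exact stabilization point of $\pd$ (Corollary~\ref{3.9}) and the combinatorial identification $W=w-1$ (Proposition~\ref{3.11}); but those are not needed for Corollary~\ref{1.3} itself.
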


We later show that the integer $W$ can be determined combinatorially and $\pd(I_n)$ stabilize in a quite early stage.
See Corollary \ref{3.9} and Proposition \ref{3.11}.

There is one more interesting consequence of Theorem \ref{1.2}.
The result of Nagel and R\"omer in \cite[Theorem 7.7]{NR} tells
that for any symmetric ideal $\II \subset S_\infty$ and an integer $p\geq 0$,
the set
$$\{j: \beta_{p,j}(\II \cap S_n) \ne 0\}$$
stabilize for $n \gg 0$.
Theorem \ref{1.2} shows that the following stronger property holds when $\II$ is a monomial ideal.

\begin{corollary}
\label{1.4}
Let $\II \subset S_\infty$ be a symmetric monomial ideal and let $I_n=\II \cap S_n$ for $n \geq 1$. There is an integer $M$ such that 
$$|\{j: \beta_{p,j}(\II \cap S_n) \ne 0\}|=M \ \ \mbox{ for $p,n \gg 0$}.$$
\end{corollary}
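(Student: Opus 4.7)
The plan is to apply Theorem \ref{1.2} directly and translate its geometric description of the Betti table into a count of distinct internal degrees appearing in a fixed homological row $p$. Choose $m$ so that $\II$ is generated by partitions of length $\leq m$, and let $D \subset \{0,1,\dots,m-1\} \times \ZZ_{\geq 0}^2$ be the finite set furnished by Theorem \ref{1.2}. Each triple $(i_0,j_0,c) \in D$ contributes a nonzero entry to row $p$ of the Betti table of $I_n$ precisely when $i_0 \leq p \leq i_0 + (n-m)$, and at that position it produces internal degree
\[j \;=\; p+j_0+c(p-i_0) \;=\; (1+c)\,p+(j_0-c\,i_0).\]
For $p \geq m-1$ the condition $i_0 \leq p$ is automatic for every $(i_0,j_0,c)\in D$, while the finite exceptional set $\{(i,j):\beta_{i,i+j}(I_{m-1})\ne 0\}$ contributes nothing to row $p$ once $p \geq m-1$, since $\pd(I_{m-1})\leq m-1$.

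Next I would analyze coincidences. Two triples $(i_0,j_0,c),(i_0',j_0',c')\in D$ give the same internal degree in row $p$ iff $(c-c')p = (j_0'-c' i_0')-(j_0-c i_0)$. When $c \neq c'$ this equation pins $p$ to a single value, so as the pair ranges over the finite set $D$ only finitely many $p$ can exhibit such a coincidence. When $c = c'$ the equation reduces to $j_0-c i_0 = j_0'-c' i_0'$, which is independent of $p$ and says precisely that the two segments $\mathcal L((i_0,j_0),c,n-m)$ and $\mathcal L((i_0',j_0'),c,n-m)$ lie on a common affine line in the $(i,i+j)$-plane. I would then introduce the equivalence relation $\sim$ on $D$ defined by $(i_0,j_0,c) \sim (i_0',j_0',c')$ iff $c = c'$ and $j_0-c i_0 = j_0'-c' i_0'$, and conclude that for all sufficiently large $p$ (exceeding the finitely many coincidence values from the distinct-slope case) and all $n$ with $n \geq p+m$ (so that every segment of $D$ reaches row $p$), the row-$p$ internal degrees are in bijection with $D/\!\sim$. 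Hence $M := |D/\!\sim|$ does the job.

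The argument is essentially elementary bookkeeping once Theorem \ref{1.2} is in hand, and I do not anticipate a serious technical obstacle. The one delicate point is keeping track of the thresholds: one must simultaneously ensure that each segment of $D$ is long enough to reach row $p$ (the condition $n \geq p+m$) and that no accidental coincidence in row $p$ occurs between inequivalent triples with different slopes (a finite set of exceptional $p$'s). Both conditions are met once $p,n$ are sufficiently large, which yields the stated stabilization with the explicit value $M = |D/\!\sim|$.
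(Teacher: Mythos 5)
Your argument is correct and is exactly the intended deduction from Theorem \ref{1.2}, which the paper leaves as an immediate consequence without spelling out the bookkeeping. Your details — the formula $j=(1+c)p+(j_0-ci_0)$ for the contributed internal degree, the thresholds $p\geq m-1$ (to clear the exceptional set coming from $I_{m-1}$) and $n\geq p+m$ (so every segment reaches homological degree $p$), the observation that cross-slope coincidences pin $p$ to finitely many values, and the resulting identification $M=|D/\!\sim|$ — fill the gap cleanly.
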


%We note that when $\II$ is a monomial ideal, $\beta_{i,j}(I\cap S_n) \ne 0$ implies $\beta_{i,j}(I\cap S_{n+1})\ne 0$ (this does not hold for general homogeneous ideals).
Nagel and R\"omer actually proved that, for a fixed $p$, there are finite number of syzygies that create all $p$th syzygies of $I_n$ when $n \gg 0$ (see also \cite{Sn} for a related result).
We think that the above corollary suggests a possibility that there might be a way to create not only all $p$th syzygies for a fixed $p$ but also all $p$th syzygies for arbitrary $p$ from a finite list of syzygies.

The results on projective dimension and regularity in this paper (Corollaries \ref{1.3}, \ref{3.9}, \ref{3.10}, \ref{constant} and Proposition \ref{3.11}) are also proved by Claudiu Raicu \cite{Ra} independently by a different method. He actually find a formula of $\pd(I)$ and $\reg(I)$ for any symmetric monomial ideal $I$ in $S_n$ and his formula determines $C,D,W$ in Corollary \ref{1.3}.

This paper is organized as follows:
In Section 2, we discuss basic properties of symmetric monomial ideals and their $\ZZ^n$-graded Betti numbers.
In Section 3, we prove our main result 
postponing the proof of our key combinatorial proposition,
and prove some refinements of Corollary \ref{1.3}.
In Section 4, we prove our key combinatorial proposition, and in Section 5 we present some open questions.

\section{Preliminary}

In this section, we discuss basic properties of symmetric monomial ideals and multigraded Betti numbers.

\subsection{Symmetric monomial ideals}
By the result of Aschenbrenner and Hiller \cite{AH}, for any symmetric ideal $\II \subset S_\infty$ there are polynomials $f_1,\dots,f_t \in S_\infty$ such that
$$\II=\big( \sigma(f_k):1 \leq k \leq t,\ \sigma \in S_\infty \big).$$
In that case, we say that $\II$ is generated by $f_1,\dots,f_t$.
A monomial ideal of $S_\infty$ is an ideal generated by monomials.
Let $\Lambda$ be the set of all partitions.
For any monomial $x^\aaa$,
there is a unique partition $\lambda$ such that $x^\aaa=\sigma(x^\lambda)$ for some $\sigma \in \sym_\infty$.
Thus, for any symmetric monomial ideal $\II \subset S_\infty$, there are partitions $\lambda(1),\dots,\lambda(t)$ such that $x^{\lambda(1)},\dots,x^{\lambda(t)}$ generates $\II$. We identify each $\lambda(k)$ with the monomial $x^{\lambda(k)}$ and say that $\lambda(1),\dots,\lambda(t)$ generate $\II$.
It is easy to see that for any symmetric monomial ideal $\II\subset S_\infty$, there is the unique minimal subset of $\Lambda$ that generates $\II$.
This set will be denoted by $\Lambda(\II)$.

We also say that a monomial ideal $I \subset S_n$ is symmetric if $I$ is fixed by the action of $\sym_n$.
In the same way as for ideals in $S_\infty$,
for any symmetric monomial ideal $I \subset S_n$,
there are partitions $\lambda(1),\dots,\lambda(t)$ of length $\leq n$ such that
$I=(\sigma(x^{(\lambda(k)}):1 \leq k \leq t, \sigma \in \sym_n)$.
We denote by $\Lambda(I)$ the unique minimal subset of $\Lambda$ that generates $I$.
Note that if $\II$ is a symmetric monomial ideal of $S_\infty$, then the ideal $I_n=\II\cap S_n$ is a symmetric monomial ideal of $S_n$ with
$$\Lambda(I_n)=\{\lambda \in \Lambda(\II): \lambda \mbox{ has length } \leq n\}.$$
For example, if $\II$ is generated by $(3,3)$ and $(2,2,2)$, then
\begin{align*}
I_1=\{0\},\ I_2=(x_1^3x_2^3),\
I_3=(x_1^3x_2^3,x_1^3x_3^3,x_2^3x_3^3,
x_1^2x_2^2x_3^2),
%I_4=(x_1^3x_2^3,x_1^3x_3^3,x_2^3x_3^3,x_1^3x_4^3,x_2^3x_4^3,x_3^3x_4^3,x_1^2x_2^2x_3^2,x_1^2x_2^2x_4^2,x_1^2x_3^2x_4^2,x_2^2x_3^2x_4^2),
\dots.
\end{align*}
We often use the following property of symmetric monomial ideals.

\begin{lemma}
\label{tech}
Let $I \subset S_n$ be a symmetric monomial ideal generated by partitions of length $\leq m$.
Suppose $n >m$.
For any vector $\aaa=(a_1,\dots,a_n)\in \ZZ_{\geq 0}^n$ with $a_1,\dots,a_{m} \geq a_n$,
if $x^\aaa \in I$ then $x_1^{a_1} \cdots x_{n-1}^{a_{n-1}} \in I$.
\end{lemma}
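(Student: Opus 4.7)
The plan is to unwind the definition: since $x^\aaa \in I$, some generator $\sigma(x^\lambda)$ of $I$ (with $\lambda \in \Lambda(I)$ of length $\ell \leq m$ and $\sigma \in \sym_n$) must divide $x^\aaa$, which translates to the inequalities $a_{\sigma(i)} \geq \lambda_i$ for $i = 1, \dots, \ell$. The goal will be to produce a (possibly different) generator of $I$ whose support avoids the variable $x_n$ but which still divides $x^\aaa$; once such a monomial is found, it will a fortiori divide $x_1^{a_1}\cdots x_{n-1}^{a_{n-1}}$, proving the lemma.

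If $n$ does not appear in $\sigma(\{1,\dots,\ell\})$, then $\sigma(x^\lambda)$ itself already divides $x_1^{a_1}\cdots x_{n-1}^{a_{n-1}}$ and we are done. Otherwise, let $j \in \{1,\dots,\ell\}$ be the unique index with $\sigma(j)=n$. The combinatorial observation is that $\sigma(\{1,\dots,\ell\})$ has only $\ell \leq m$ elements but already contains $n > m$, so its intersection with $\{1,\dots,m\}$ has size at most $m-1$; in particular, one can choose an index $k \in \{1,\dots,m\} \setminus \sigma(\{1,\dots,\ell\})$. The plan is then to replace $\sigma$ by $\tau := (k\;n)\circ \sigma \in \sym_n$, which satisfies $\tau(j)=k$ and $\tau(i)=\sigma(i)$ for $i \in \{1,\dots,\ell\}\setminus\{j\}$ (the latter because $k$ was chosen outside of $\sigma(\{1,\dots,\ell\})$).

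It then remains to verify that $\tau(x^\lambda)$ divides $x_1^{a_1}\cdots x_{n-1}^{a_{n-1}}$. The divisibility constraints at positions $\sigma(i)$ for $i \neq j$ are inherited from $\sigma(x^\lambda)\mid x^\aaa$, while at the new position $k$ we need $a_k \geq \lambda_j$. This is precisely where the hypothesis pays off: since $k \leq m$ we have $a_k \geq a_n$ by assumption, and since $\sigma(j)=n$ and $\sigma(x^\lambda)\mid x^\aaa$ we have $a_n \geq \lambda_j$. The one step that genuinely requires care -- and really the main obstacle -- is guaranteeing that $\{1,\dots,m\} \setminus \sigma(\{1,\dots,\ell\})$ is nonempty, which is exactly what the joint assumption that $\lambda$ has length $\leq m$ and $n>m$ provides; without this slack, there would be no room to move the offending variable $x_n$ off the support.
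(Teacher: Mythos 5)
Your proof is correct and follows essentially the same route as the paper's: take a generator of $I$ dividing $x^\aaa$, and if $x_n$ lies in its support, use the fact that the support has at most $m$ elements (one of which is $n>m$) to find a spare index $k\leq m$, then swap $x_n$ for $x_k$; the hypothesis $a_k\geq a_n$ combined with $a_n\geq\lambda_j$ makes the swapped generator still divide. The paper phrases the swap directly on the monomial $u=x_{i_1}^{b_1}\cdots x_{i_m}^{b_m}$ rather than via a transposition composed with $\sigma$, but the argument is identical.
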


\begin{proof}
Since $I$ is generated by partitions of length $\leq m$, if $x^\aaa \in I$,
then there is a monomial $u=x_{i_1}^{b_1} \cdots x_{i_m}^{b_m} \in I$ with $i_1 < \cdots <i_m$ that divides $x^\aaa$.
If $i_m<n$, this monomial $u$ clearly divides $x_1^{a_1} \cdots x_{n-1}^{a_{n-1}}$.
Suppose $i_m =n$.
Then there is a variable $x_k$ with $k \leq m$ that does not appear in $x_{i_1}^{b_1} \cdots x_{i_m}^{b_m}$ and the monomial $u'=x_{i_1}^{b_1} \cdots x_{i_{m-1}}^{b_{m-1}} x_{k}^{b_m} \in I$ by the symmetry of $I$.
This monomial $u' \in I$ divides  $x_1^{a_1} \cdots x_{n-1}^{a_{n-1}}$ since $a_k \geq a_n \geq b_m$.
\end{proof}

\subsection{Betti numbers via simplicial complexes}

When studying graded Betti numbers of monomial ideals,
it is standard in commutative algebra to consider their multidegrees, that is, their $\ZZ^n$-gradings. An advantage of considering multidegrees is the fact that $\ZZ^n$-graded Betti numbers of monomial ideals can be computed from certain simplicial complexes.
We quickly recall this fact.

Consider the $\ZZ^n$-grading of $S_n$ such that the degree of $x_i$ is the $i$th standard vector of $\ZZ^n$. For a finitely generated $\ZZ^n$-graded $S_n$-module $M$ and $\aaa=(a_1,\dots,a_n) \in \ZZ^n$,
we write $M_\aaa$ for its graded component of degree $\aaa$ and call the numbers $\beta_{i,\aaa}(M)=\dim_\kk \Tor_i(M,\kk)_\aaa$ the {\bf $\ZZ^n$-graded Betti numbers} of $M$.

A simplicial complex $\Delta$ on $[n]=\{1,2,\dots,n\}$ is a collection of subsets of $[n]$ satisfying that $F \in \Delta$ and $G \subset F$ imply $G \in \Delta$.
(We do not assume that every singleton of $[n]$ is contained in $\Delta$.)
Elements in $\Delta$ are called {\bf faces} of $\Delta$
and faces having cardinality $1$ are called {\bf vertices} of $\Delta$.
We denote by $\widetilde H_i(\Delta)$ the $i$th reduced homology group of $\Delta$ over a field $\kk$.

Let $\aaa=(a_1,\dots,a_n)\in \ZZ^n_{\geq 0}$
and $\ua=(a_1-1,\dots,a_n-1)$.
For a monomial ideal $I \subset S_n$,
we define the simplicial complex
$$
\Delta_\aaa^I =
\left\{F \subset [n]: \frac {x^\aaa} {x^F} \in I\right\}= \{F \subset [n] : x^\ua \cdot x^{[n] \setminus F} \in I\}
$$
where $x^F=\prod_{i \in F} x_i$.
Note that we consider that $\frac {x^\aaa} {x^F}$ and $x^{\ua} \cdot x^{[n] \setminus F}$ are not in $I$ if they have negative exponents.
The following fact is known.

\begin{lemma}
\label{2.1}
For any monomial ideal $I \subset S_n$ and $\aaa \in \ZZ_{\geq 0}^n$, we have
$$\Tor_i(I,\kk)_\aaa \cong \widetilde H_{i-1}(\Delta_\aaa^I) \ \ \mbox{ for all }i \geq 0.$$
\end{lemma}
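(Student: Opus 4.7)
The plan is to compute $\Tor_i(I,\kk)_\aaa$ via the Koszul complex $K_\bullet$ on $x_1,\dots,x_n$, which is a $\ZZ^n$-graded minimal free resolution of $\kk$ as an $S_n$-module. Its degree-$i$ term $K_i$ has a basis $\{e_F : F \subset [n],\, |F|=i\}$ with $\deg e_F = \sum_{j \in F} \ee_j$, and the differential acts by $e_F \mapsto \sum_{j \in F} \pm\, x_j\, e_{F \setminus \{j\}}$. Since $K_\bullet \to \kk$ is a free resolution, we may identify
\[
\Tor_i(I,\kk)_\aaa \;\cong\; H_i\bigl(I \otimes_{S_n} K_\bullet\bigr)_\aaa,
\]
and this reduction is the core of the strategy.

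Next, I would identify the multidegree-$\aaa$ strand of $I \otimes_{S_n} K_\bullet$ with a simplicial chain complex. A $\kk$-basis of $(I \otimes K_i)_\aaa$ consists of pairs $(m, e_F)$ with $|F|=i$, $m$ a monomial of $I$, and $\deg m + \deg e_F = \aaa$. The last condition forces $m = x^\aaa/x^F$ (with nonnegative exponents), and the condition $m \in I$ becomes exactly $F \in \Delta_\aaa^I$. Sending $(x^\aaa/x^F) \otimes e_F$ to the face $F$ therefore yields an isomorphism of $\kk$-vector spaces
\[
(I \otimes_{S_n} K_i)_\aaa \;\cong\; \widetilde C_{i-1}(\Delta_\aaa^I;\kk),
\]
where on the right we use the augmented simplicial chain complex, so that an $i$-element face sits in homological degree $i-1$ (the empty face, which belongs to $\Delta_\aaa^I$ precisely when $x^\aaa \in I$, contributes the augmentation term in degree $-1$).

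Finally, I would check that under this identification the Koszul differential matches the reduced simplicial boundary map. This is immediate from the formula $e_F \mapsto \sum_{j \in F} \pm\, x_j\, e_{F \setminus \{j\}}$: multiplication by $x_j$ on the module side corresponds to removing $j$ from $F$ in the exponent bookkeeping on the chain side, and the signs come out to the standard simplicial signs. Taking homology then yields $\Tor_i(I,\kk)_\aaa \cong \widetilde H_{i-1}(\Delta_\aaa^I)$. The entire argument is essentially bookkeeping, so there is no serious obstacle; the only point to watch is the homological shift by one between face cardinality and chain-complex degree, which produces the index $i-1$ in the statement.
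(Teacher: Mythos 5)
Your proposal is correct and follows exactly the route the paper indicates: identify the multidegree-$\aaa$ strand of $I\otimes_{S_n}K_\bullet$ (the Koszul complex on $x_1,\dots,x_n$) with the reduced simplicial chain complex of $\Delta_\aaa^I$, with the shift $|F|=i \leftrightarrow$ homological degree $i-1$ producing the index $i-1$. The paper merely cites \cite[Theorem 1.34]{MS} and sketches this same identification in one sentence, so you have simply supplied the standard details.
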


Indeed, the degree $\aaa$ homogeneous component of the Koszul complex of $I$ w.r.t.\ the variables $x_1,\dots,x_n$ can be identified with the simplicial chain complex of $\Delta_\aaa^I$.
See \cite[Theorem 1.34]{MS}.

Here we also recall a few basic facts on $\ZZ^n$-graded Betti numbers and homologies of simplicial complexes.
Let $I \subset S_n$ be a monomial ideal and $G(I)$ the minimal set of monomial generators of $I$.
The set of all lcms of monomials in $G(I)$ is called the {\bf lcm lattice of $I$} and will be denoted by $\mathrm{Lcm}(I)$.
For a vector $\aaa=(a_1,\dots,a_n) \in \ZZ_{\geq 0}^n$, we write
$$\supp(\aaa)=\{i: a_i>0\}$$
and $\supp(x^\aaa)=\supp(\aaa)$.

\begin{lemma}
\label{2.2}
Let $I \subset S_n$ be a monomial ideal and $\aaa \in \ZZ_{\geq 0}^n$.
\begin{enumerate}
\item[(i)] If $\aaa \not \in \mathrm{Lcm}(I)$ then $\beta_{i,\aaa}(I)=0$.
\item[(ii)] If there is an element $m \in G(I)$ such that $m$ divides $x^\aaa$ and
$\supp(x^\aaa)=\supp(x^\aaa/m)$ then $\beta_{i,\aaa}(I)=0$.
\end{enumerate}
\end{lemma}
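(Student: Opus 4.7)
The plan is to invoke Lemma~\ref{2.1} and reduce both statements to showing that the reduced homology of the simplicial complex $\Delta_\aaa^I$ vanishes. In each part I would exhibit a vertex $k\in\supp(\aaa)$ with the property that $F\in\Delta_\aaa^I$ implies $F\cup\{k\}\in\Delta_\aaa^I$, so that $\Delta_\aaa^I$ is a cone with apex $k$ and hence acyclic.

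For part~(ii), write $m=x^\bb$. The hypothesis $\supp(x^\aaa)=\supp(x^\aaa/m)$ translates to the strict inequality $a_i>b_i$ for every $i\in\supp(\bb)$. Fix any such $k$; it lies in $\supp(\aaa)$ since $a_k>b_k\geq 1$. Given $F\in\Delta_\aaa^I$, the non-trivial case is $k\notin F$, and it suffices to check that $m$ still divides $x^\aaa/(x^F\cdot x_k)$. At coordinate $k$ the exponent is $a_k-1$, which dominates $b_k$ by the strict inequality; at any other coordinate $i$ the exponent is at least $a_i-1$, which dominates $b_i$ either because $i\in\supp(\bb)$ (strict inequality again) or because $b_i=0$ and $a_i\geq 1$ (the latter holds whenever $i\in F$ since $F\subseteq\supp(\aaa)$).

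For part~(i), if $x^\aaa\notin I$ then $\Delta_\aaa^I$ is the void complex and there is nothing to prove. Otherwise let $m_1,\dots,m_s$ be the elements of $G(I)$ dividing $x^\aaa$, and set $x^\bb=\mathrm{lcm}(m_1,\dots,m_s)$. Then $\bb\in\mathrm{Lcm}(I)$ and $\bb\leq\aaa$ coordinate-wise, but $\bb\neq\aaa$ by hypothesis, so some coordinate $k$ satisfies $b_k<a_k$. Now every $m_j$ has $k$-exponent at most $b_k\leq a_k-1$. For $F\in\Delta_\aaa^I$ with $k\notin F$, some $m_j$ divides $x^\aaa/x^F$, and the same $m_j$ continues to divide $x^\aaa/(x^F\cdot x_k)$ by a coordinate check identical to the one in~(ii). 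Hence $\Delta_\aaa^I$ is again a cone with apex $k$.

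The only real obstacle is conceptual: one must recognize that each hypothesis is tailored to produce a coordinate $k$ which is not fully used up by at least one relevant generator, so that further dividing $x^\aaa/x^F$ by $x_k$ preserves its membership in $I$. After identifying this $k$, the verification that $\Delta_\aaa^I$ is a cone is a straightforward comparison of exponents, and the minor book-keeping at $i=0$ (where reduced homology in degree $-1$ could matter) follows from observing that in both cases $x^\aaa$ cannot be a minimal generator of $I$.
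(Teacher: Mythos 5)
Your argument is correct, and it takes a genuinely different route from the paper's for part (i). The paper disposes of (i) by citing the Taylor resolution: its free modules live only in multidegrees from $\mathrm{Lcm}(I)$, so $\Tor_i(I,\kk)_\aaa = 0$ whenever $\aaa \notin \mathrm{Lcm}(I)$. You instead stay entirely inside the simplicial picture of Lemma~\ref{2.1}: setting $x^\bb$ to be the lcm of the generators dividing $x^\aaa$, you find a coordinate $k$ with $b_k < a_k$ and observe that whichever generator certifies $F \in \Delta^I_\aaa$ also certifies $F \cup \{k\} \in \Delta^I_\aaa$, so $\Delta^I_\aaa$ is a cone with apex $k$. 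This is more elementary and self-contained, at the small cost of treating the degenerate case $x^\aaa \notin I$ separately. For part (ii), the paper notes $\Delta^I_\aaa$ is the full simplex on $\supp(\aaa)$ (every $F \subseteq \supp(\aaa)$ still leaves $m$ dividing $x^\aaa/x^F$ because $a_i > b_i$ on $\supp(\aaa)$); your cone-with-apex-$k$ argument proves the weaker but equally sufficient statement of contractibility, and unifies nicely with your proof of (i). One small streamlining: once $\Delta^I_\aaa$ is known to be a nonempty cone it contains the apex $\{k\}$ as a vertex and is therefore contractible, so all reduced homology including $\widetilde H_{-1}$ vanishes automatically; the final remark about $x^\aaa$ not being a minimal generator is thus superfluous.
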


The first statement is an easy consequence of Taylor resolutions (see \cite[\S 7.1]{HH}),
and the second statement follows since $\Delta^I_\aaa$ becomes the $(n-1)$-simplex $\{F : F \subset [n]\}$ under the assumption.
See the proof of \cite[Theorem 3.2]{BPS}.

We also need the following fact.

\begin{lemma}
\label{2.3}
Let $I \subset S_n$ be a monomial ideal and $\aaa=(a_1,\dots,a_n) \in \ZZ_{\geq 0}^n$.
\begin{enumerate}
\item[(i)] If $a_n=0$ then $\beta_{i,\aaa}(I)=\beta_{i,(a_1,\dots,a_{n-1})}(I\cap S_{n-1})$.
\item[(ii)]
If $\beta_{i,\aaa}(I) \ne 0$ then $i < |\supp(\aaa)|$.
\end{enumerate}
\end{lemma}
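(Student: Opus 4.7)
My plan is to reduce both parts to statements about the simplicial complexes $\Delta^I_\aaa$ via Lemma \ref{2.1}, and then to read the conclusions off from the location of $\supp(\aaa)$.

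For (i), the first step is to observe that when $a_n=0$, no face of $\Delta^I_\aaa$ can contain $n$: if $n \in F$ then $x^\aaa/x^F$ would have exponent $-1$ at $x_n$, and by the stated convention such a fraction is not regarded as lying in $I$. Hence every face of $\Delta^I_\aaa$ is a subset of $[n-1]$. For such an $F$, the monomial $x^\aaa/x^F$ already lives in $S_{n-1}$, and a monomial in $S_{n-1}$ belongs to $I$ iff it belongs to $I\cap S_{n-1}$. This gives a literal identification $\Delta^I_\aaa = \Delta^{I\cap S_{n-1}}_{\aaa'}$ with $\aaa'=(a_1,\dots,a_{n-1})$, after which Lemma \ref{2.1} applied on both sides yields the equality of Betti numbers.

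For (ii), the same support-restriction argument shows that every face of $\Delta^I_\aaa$ is contained in $\supp(\aaa)$: if $k \notin \supp(\aaa)$ then $a_k=0$, so no face of $\Delta^I_\aaa$ can contain $k$. Writing $d = |\supp(\aaa)|$, the complex $\Delta^I_\aaa$ is therefore a subcomplex of the $(d-1)$-simplex on vertex set $\supp(\aaa)$. I would then argue that $\widetilde H_{d-1}(\Delta^I_\aaa)=0$ in all cases: either $\Delta^I_\aaa$ contains the top face $\supp(\aaa)$, in which case by downward closure it equals the full simplex on $\supp(\aaa)$ and is contractible; or it misses the top face, in which case the $(d-1)$-chain group is already zero. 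Combined with the obvious vanishing $\widetilde H_j=0$ for $j\geq d$, Lemma \ref{2.1} then forces any nonzero $\beta_{i,\aaa}(I)$ to satisfy $i-1 \leq d-2$, i.e., $i < |\supp(\aaa)|$.

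Neither part presents a genuine obstacle once Lemma \ref{2.1} is available; the only subtlety is to keep invoking the convention that fractions with negative exponents are not in $I$, which is precisely what forces the vertex set of $\Delta^I_\aaa$ to shrink into $\supp(\aaa)$ (and into $[n-1]$ in part (i)) in the right way.
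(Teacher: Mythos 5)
Your proof of part (i) is essentially the paper's: both identify $\Delta^I_\aaa$ with $\Delta^{I\cap S_{n-1}}_{(a_1,\dots,a_{n-1})}$ when $a_n=0$ and then invoke Lemma \ref{2.1}. For part (ii), however, you take a genuinely different route, and a correct one. The paper relabels so that $\supp(\aaa)=\{1,\dots,t\}$, applies part (i) to replace $\beta_{i,\aaa}(I)$ by $\beta_{i,(a_1,\dots,a_t)}(I\cap S_t)$, and then appeals to the global projective-dimension bound $\pd(I\cap S_t)\leq t-1$ (Hilbert's syzygy theorem for the $t$-variable polynomial ring). You instead argue entirely inside the simplicial complex: every face of $\Delta^I_\aaa$ must avoid indices outside $\supp(\aaa)$, so $\Delta^I_\aaa$ sits inside the $(d-1)$-simplex on $\supp(\aaa)$, and $\widetilde H_{d-1}$ vanishes whether or not the top face belongs to the complex (full simplex is contractible; otherwise $C_{d-1}=0$), with $\widetilde H_j=0$ for $j\geq d$ by dimension reasons. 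This is more self-contained — it makes no use of part (i) or of the syzygy theorem, and in effect gives a combinatorial proof of that projective-dimension bound in the monomial-ideal setting — whereas the paper's version is shorter because it leans on a standard external fact and recycles part (i).
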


\begin{proof}
The first statement follows from the fact that $\Delta^I_{(a_1,\dots,a_{n-1},0)}=\Delta^{I \cap S_{n-1}}_{(a_1,\dots,a_{n-1})}$.
We prove (ii).
Suppose $t =|\supp(\aaa)|$.
We may assume $\aaa=(a_1,\dots,a_t,0,\dots,0)$.
Then the first statement tells 
$\beta_{i,(a_1,\dots,a_t)}(I\cap S_t)=\beta_{i,\aaa}(I) \ne 0$.
Since $I \cap S_t$ is an ideal in the polynomial ring with $t$ variables,
$i$ must be smaller than $t=|\supp(\aaa)|$.
\end{proof}

We say that a simplicial complex is {\bf acyclic} if all its homology groups are zero.
Let $\Delta \subset \Gamma$ be simplicial complexes.
Then there is a natural map of homologies
$\iota: \widetilde H_i(\Delta) \to \widetilde H_i(\Gamma)$
induced by the inclusion $\Delta \subset \Gamma$.
We often use the next fact.

\begin{lemma}
\label{3.1}
Let $\Delta \subset \Gamma \subset \Sigma$ be simplicial complexes.
If $\Gamma$ is acyclic, then the map 
$\iota: \widetilde H_i(\Delta) \to \widetilde H_i(\Sigma)$
induced by the inclusion $\Delta \subset \Sigma$ is zero for any $i$.
\end{lemma}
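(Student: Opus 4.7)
The plan is to invoke nothing more than the functoriality of reduced simplicial homology with respect to inclusions of simplicial complexes. The inclusion $j \colon \Delta \hookrightarrow \Sigma$ factors as the composition $\Delta \stackrel{i_1}{\hookrightarrow} \Gamma \stackrel{i_2}{\hookrightarrow} \Sigma$. Passing to augmented simplicial chain complexes and then to homology is functorial, so the induced map $\iota = j_\ast$ admits a factorization
\[
\widetilde H_i(\Delta) \stackrel{(i_1)_\ast}{\longrightarrow} \widetilde H_i(\Gamma) \stackrel{(i_2)_\ast}{\longrightarrow} \widetilde H_i(\Sigma),
\]
and $\iota = (i_2)_\ast \circ (i_1)_\ast$ for every $i$.

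By the hypothesis that $\Gamma$ is acyclic, the middle group $\widetilde H_i(\Gamma)$ is zero for every $i$, hence the composition vanishes identically. That is exactly the conclusion $\iota = 0$. The only point worth checking is that functoriality is applied to \emph{reduced} homology rather than ordinary homology; this is automatic, since reducing corresponds to augmenting the chain complex by $\kk$ in degree $-1$, and an inclusion of simplicial complexes respects this augmentation.

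There is essentially no obstacle here: the statement is the standard fact that a map of spaces (or complexes) factoring through an acyclic object is zero on homology, applied in its simplest guise. The lemma will therefore be used as a black box later to force certain connecting or induced maps between the simplicial complexes $\Delta^I_{\mathbf a}$ of Section 2.2 to vanish, which is the real content it is being invoked for.
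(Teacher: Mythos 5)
Your proof is correct and is essentially identical to the paper's: both factor the inclusion $\Delta \hookrightarrow \Sigma$ through $\Gamma$ and use functoriality of reduced homology together with $\widetilde H_i(\Gamma)=0$. Nothing to add.
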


\begin{proof}
The statement immediately follows from the fact that $\iota$ equals to the composition
of the two maps 
$\widetilde H_i(\Delta) \to \widetilde H_i(\Gamma)$ and $\widetilde H_i(\Gamma) \to \widetilde H_i(\Sigma)$ induced by inclusions.
\end{proof}

\subsection{Multigraded Betti numbers of symmetric monomial ideals}

If $I \subset S_n$ is a symmetric monomial ideal, then $\Tor_i(I,\kk)$ admit an action of $\sym_n$ induced by the action on $I$.
By this action, $\sigma \in \sym_n$ sends each element $f \in \Tor_i(I,\kk)$ of degree $(a_1,\dots,a_n)$ to an element of degree $(a_{\sigma(1)},\dots,a_{\sigma(n)})$.
Thus, to study $\ZZ^n$-graded Betti numbers of $I$, it is enough to consider degrees $(a_1,\dots,a_n)$ with $a_1 \geq \cdots \geq a_n$.

Let $\Delta$ be a simplicial complex.
We say that $\Delta$ is a cone with apex $v$ if
$$\Delta=\Delta' \cup \{\{v\} \cup F: F \in \Delta'\}$$
where $\Delta'=\{F \in \Delta: v \not \in F\}$.
It is standard in combinatorial topology that if $\Delta$ is a cone, then $\Delta$ is acyclic.
The next statement is easy to prove, but gives a strong restriction to possible multidegrees for Betti numbers of symmetric monomial ideals.

\begin{proposition}
\label{2.4}
Let $I \subset S_n$ be a symmetric monomial ideal generated by partitions of length $\leq m$
and $\aaa=(a_1,\dots,a_t,0,\dots,0)\in \ZZ_{\geq 0}^n$ with $a_1 \geq \cdots \geq a_t \geq 1$.
If $m < t$ and $a_m > a_t$, then $\beta_{i,\aaa}(I)=0$ for all $i$.
\end{proposition}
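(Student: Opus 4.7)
The plan is to invoke Lemma~\ref{2.1} and show that the simplicial complex $\Delta_\aaa^I$ has vanishing reduced homology in every degree. I will do this by exhibiting $\Delta_\aaa^I$ as a cone with apex $t$, except in the degenerate case where $\Delta_\aaa^I$ is the void complex, for which the conclusion is automatic.

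The cone property reduces to the claim: if $F \in \Delta_\aaa^I$ and $t \notin F$, then $F \cup \{t\} \in \Delta_\aaa^I$. Passing to exponent vectors, if $u := x^\ua \cdot x^{[n] \setminus F} \in I$ has exponent vector $\bb$, then $b_i = a_i$ for $i \notin F$ and $b_i = a_i - 1$ for $i \in F$; note that $u$ is supported in $[t]$ and $b_t = a_t$. The face $F \cup \{t\}$ corresponds to the monomial $u/x_t$, so the claim becomes: $u \in I$ implies $u/x_t \in I$.

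To prove this last implication I plan to invoke Lemma~\ref{tech} applied to $I \cap S_t \subset S_t$: the intersection is again a symmetric monomial ideal generated by partitions of length $\leq m$, and the hypothesis $m < t$ matches the lemma's requirement. The inequality condition $b_1, \ldots, b_m \geq b_t$ is exactly where the strict hypothesis $a_m > a_t$ enters: for each $i \leq m$ one has $b_i \geq a_i - 1 \geq a_m - 1 \geq a_t = b_t$. The lemma then strips $x_t^{a_t}$ off $u$, and multiplying back by $x_t^{a_t - 1}$ (legitimate because $a_t \geq 1$) delivers $u/x_t \in I$.

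The main conceptual point is to see that the strict gap $a_m > a_t$ is precisely the slack needed to absorb the single unit lost when an index $i \leq m$ belongs to $F$; everything else is straightforward bookkeeping and the restriction of $I$ to $S_t$ needed to make Lemma~\ref{tech} applicable.
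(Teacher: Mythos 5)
Your proof is correct and follows essentially the same route as the paper: show that $\Delta_\aaa^I$ is a cone with apex $t$, with the cone condition reduced to stripping $x_t^{a_t}$ via Lemma~\ref{tech} (applied to the ideal restricted to $S_t$) and then multiplying $x_t^{a_t-1}$ back in. Your only addition is to make explicit the passage to $I \cap S_t$ needed to match the indices of Lemma~\ref{tech}, which the paper leaves implicit; this is a welcome clarification rather than a different argument.
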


\begin{proof}
We prove that the simplicial complex
$$\Delta_\aaa^I=\{ F \subset [n]: x^\ua \cdot  x^{[n] \setminus F} \in I\}
=\{F \subset [t]: x_1^{a_1-1}\cdots x_t^{a_t-1} x^{[t] \setminus F} \in I\}$$
is a cone with apex $t$.
Let $F \in \Delta^I_\aaa$ with $t \not \in F$.
What we must prove is that $F \cup \{t\} \in \Delta^I_\aaa$.
Since $F \in \Delta^I_\aaa$, we have 
\[
x_1^{a_1-1}\cdots x_t^{a_t-1} x^{[t] \setminus F}
=x_1^{a_1-1}\cdots x_{t-1}^{a_{t-1}-1} \cdot  x^{[t] \setminus (F \cup \{t\})} \cdot x_t^{a_t} \in I.
\]
Since $m<t$ and $a_m >a_t$,
Lemma \ref{tech} tells 
$(x_1^{a_1-1}\cdots x_{t-1}^{a_{t-1}-1})x^{[t]\setminus (F \cup \{t\})} \in I$
and we have $(x_1^{a_1-1}\cdots x_{t-1}^{a_{t-1}-1} x_t^{a_t-1})x^{[t]\setminus (F \cup \{t\})} \in I$.
This tells $F \cup \{t\} \in \Delta_\aaa^I$ as desired.
\end{proof}

Another expression of Proposition \ref{2.4} is that, with the same notation as in the proposition, if $\beta_{i,\aaa}(I_n) \ne 0$  for some $i$ and $n \geq m$, then $\aaa$ must be of the form
$$\aaa=(a_1,\dots,a_{m-1},a_m,a_m,\dots,a_m,0,\dots,0).$$

\subsection{Warm up: Ideals generated by partitions of length $2$}

In this subsection, to get some feelings about (multigraded) Betti tables of symmetric monomial ideals,
we discuss a very special case when the ideal is generated by partitions of length $2$.
This subsection can be considered as a special case of a more general result proved later and can be skipped if the reader is just interested in the proof of the main result.

For integers $a$ and $\ell$, we write $(a^\ell)=(a,\dots,a) \in \ZZ^\ell$.
We also define $(a_1^{\ell_1},\dots,a_t^{\ell_t})\in \ZZ^{\ell_1+ \cdots + \ell_t}$ similarly.
Let $\II \subset S_\infty$ be a symmetric monomial ideal generated by partitions of length $2$.
Then $\Lambda(\II)$ must be a set of the form
$$\Lambda(I)=\{(p_1,q_1),(p_2,q_2),\dots,(p_t,q_t)\}$$
with
$$p_1>p_2> \cdots >p_t \geq q_t > \cdots > q_2 > q_1 \geq 1.$$
In this situation, all possible $\ZZ^n$-graded Betti numbers are determined in the following way.

\begin{proposition}
\label{2.6}
Let $\II$ and $(p_1,q_1),\dots,(p_t,q_t)$ be as above.
Fix $n \geq 2$ and let $I_n=\II \cap S_n$. Then
$\beta_{i,\aaa}(I) \in \{0,1\}$ for any $\aaa \in \ZZ_{\geq 0}^n$ and
\begin{align*}
&\big\{ \big(i,(a_1,\dots,a_n)\big): \beta_{i,(a_1,\dots,a_n)}(I_n) \ne 0, a_1 \geq \cdots \geq a_n\big\}\\
&=\left(\bigcup_{k=1}^t \big\{\big(i,(p_k,q_k^{i+1},0^{n-i-2})\big):i=0,1,\dots,n-2\big\}\right)\\
&\hspace{16pt} \cup \left(\bigcup_{k=1}^{t-1} \big\{\big(i+1,(p_k,q_{k+1}^{i+1},0^{n-i-2})\big):i=0,1,\dots,n-2\big\}\right). 
\end{align*}
\end{proposition}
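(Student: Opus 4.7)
The plan is to compute the reduced homologies of the simplicial complexes $\Delta^{I_n}_\aaa$ from Lemma \ref{2.1} directly, exploiting the $\sym$-symmetry of these complexes. First, by Proposition \ref{2.4} applied with $m=2$, together with Lemma \ref{2.3}(i), it suffices to consider multidegrees of the form $\aaa=(p,q^{s-1}) \in \ZZ_{\geq 0}^s$ for $p\geq q\geq 1$ and $2\leq s\leq n$; indeed, $\beta_{i,(p,q^{s-1},0^{n-s})}(I_n)$ equals $\dim_\kk \widetilde{H}_{i-1}(\Delta)$ where $\Delta:=\Delta^{I_s}_{(p,q^{s-1})}$.

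Because $\Delta$ is invariant under the $\sym_{s-1}$-action permuting the vertices $\{2,\dots,s\}$, membership of a face $F$ in $\Delta$ depends only on whether $1\in F$ and on $|F\cap\{2,\dots,s\}|$. Deciding whether $x^\ua \cdot x^{[s]\setminus F}$ lies in $I_s$ amounts to checking when its two largest exponents dominate some pair $(p_k,q_k)$. When $p>q$, this assigns to each of the four possible face types a single Boolean condition on $(p,q)$, which I shall call $A,B,C,D$: $A$ controls faces with $1\notin F$ and $F\subsetneq \{2,\dots,s\}$; $B$ controls the single face $\{2,\dots,s\}$; $C$ controls faces with $1\in F$ and $F\subsetneq [s]$; and $D$ controls the face $[s]$. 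The monotonicity of the $p_k$ and $q_k$ yields the implications $D\Rightarrow B,C$ and $B,C\Rightarrow A$, leaving only six admissible truth patterns of $(A,B,C,D)$.

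The core combinatorial step is to enumerate these six patterns. In each, $\Delta$ is a standard shape: the void complex; the boundary of the $(s-2)$-simplex on $\{2,\dots,s\}$; that simplex itself; the $(s-1)$-simplex on $[s]$ with the facet $\{2,\dots,s\}$ removed; the boundary of the $(s-1)$-simplex on $[s]$; or the full $(s-1)$-simplex on $[s]$. Only two of these are non-acyclic: the boundary of the $(s-2)$-simplex (with $\widetilde{H}_{s-3}=\kk$) and the boundary of the $(s-1)$-simplex (with $\widetilde{H}_{s-2}=\kk$), which correspond to the patterns $(A,B,C,D)=(1,0,0,0)$ and $(1,1,1,0)$ respectively.

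It remains to translate these two patterns into arithmetic constraints on $(p,q)$ using $p_1>\cdots>p_t$ and $q_1<\cdots<q_t$. A short check shows that $(1,0,0,0)$ forces $(p,q)=(p_k,q_k)$ for some $k$, producing (via $i=s-2$) the Type 1 multidegrees $(p_k,q_k^{i+1},0^{n-i-2})$ at homological degree $i$; while $(1,1,1,0)$ forces $(p,q)=(p_k,q_{k+1})$ with $k<t$, producing the Type 2 multidegrees at homological degree $i+1=s-1$. The main obstacle I anticipate is the degenerate case $p=q$, which may occur only when $k=t$ and $p_t=q_t$: the inequality $p-1<q$ then reverses the roles of the top two exponents for certain faces containing $1$, so the $C$ case must be split further according to $|F\cap\{2,\dots,s\}|$ and treated separately to complete the argument.
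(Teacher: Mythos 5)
Your overall strategy — reduce to multidegrees $(p,q^{s-1})$ via Proposition \ref{2.4} and Lemma \ref{2.3}(i), then identify $\Delta^{I_s}_{(p,q^{s-1})}$ by exploiting $\sym_{s-1}$-symmetry — is the same one the paper uses, and your systematic ``truth-pattern'' bookkeeping via $A,B,C,D$ is a tidy reformulation of the paper's two-case computation. However, there is a genuine gap in your treatment of the degenerate case.

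You assert that $p=q$ ``may occur only when $k=t$ and $p_t=q_t$,'' but that is false. Starting from Lemma \ref{2.2}(i) and (ii), a multidegree $\aaa=(p^s)$ with nonzero Betti number forces $p=p_t$, but it does \emph{not} force $p_t=q_t$. When $p=q=p_t$ with $p_t>q_t$, all $s$ vertices are interchangeable and one checks directly that $\Delta=\{F\subset[s]:|F|\le s-1\}$ is the boundary of the $(s-1)$-simplex, giving $\beta_{s-1,(p_t^s)}(I_s)=1$. For instance, with $\Lambda(\II)=\{(2,1)\}$ (so $t=1$, $p_1=2>q_1=1$), the ideal $I_2=(x_1^2x_2,\,x_1x_2^2)$ has the Koszul syzygy of multidegree $(2,2)=(p_1,p_1)$, so $\beta_{1,(2,2)}(I_2)=1$ — a nonzero position of the form $(p_t,p_t^{\,i+1},0^{\,n-i-2})$ at homological degree $i+1$, which is not in either family that you produce. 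Thus your final ``short check'' cannot succeed as stated: an honest completion of the $p=q$ analysis yields an extra family when $p_t>q_t$, and it also shows (when $p_t=q_t$ and $s\ge 3$) that $\Delta^{I_s}_{(p_t^s)}$ is the $(s-3)$-skeleton of the $(s-1)$-simplex, whose top reduced homology has rank $s-1$, contradicting the asserted bound $\beta_{i,\aaa}\in\{0,1\}$. You should be aware that the paper's own argument has the same blind spot: the step ``$a_2\le q_{k+1}$'' is only available for $k<t$, and the displayed simplification of $\Delta^{I_n}_\aaa$ in Case (I) omits faces containing the vertex $1$ when $k=t$ and $p_t=q_t$. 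So the degenerate case you flagged is precisely where both proofs — yours and the paper's — need to be repaired, and your claim that it can arise only when $p_t=q_t$ is what prevents your outline from closing.
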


\begin{proof}
Let $\aaa=(a_1,\dots,a_n)$ with $a_1 \geq \cdots \geq a_n$ and suppose $\beta_{i,\aaa}(I_n) \ne 0$.
Then, by Lemma \ref{2.2}(i), $a_1$ must equal to $p_k$ for some $k$.
Also, $q_2 \geq q_k$ since $x^\aaa$ must be divisible by some $x_1^{p_k} x_2^{q_k}$
and Lemma \ref{2.2}(ii) tells $a_2 \leq q_{k+1}$ (otherwise $x^\aaa$ and $x^\aaa/(x_1^{p_{k+1}}x_2^{q_{k+1}})$ have the same support).
Hence $a_2=q_k$ or $a_2=q_{k+1}$,
and Proposition \ref{2.4}
tells that $\aaa$ must be of the form either
 $\aaa=(p_k,q_k^\ell,0^{n-\ell-1})$
or $\aaa=(p_k,q_{k+1}^\ell,0^{n-\ell-1})$
for some integer $\ell$.
The desired statement follows from the following case analysis.

{\bf Case (I).}
Suppose $\aaa=(p_k,q_k^\ell,0^{n-\ell-1})$.
Then
\begin{align*}
\Delta_\aaa^{I_n} 
&= \{F \subset [\ell+1]: x_1^{p_k-1} x_2^{q_k-1} \cdots x_{\ell+1}^{q_k-1} x^{[\ell+1]\setminus F} \in I_n\}\\
&=\{ F \subset [\ell+1]: 1 \not \in F \mbox{ and } ([\ell+1]\setminus F) \cap \{2,\dots,\ell+1\} \ne \emptyset\}\\
&=\{F \subset \{2,\dots,\ell+1\}: F \ne \{2,\dots,\ell+1\}\}.
\end{align*}
Thus $\Delta_\aaa^{I_n}$ is the boundary of the $(\ell-1)$-simplex and we have
$\beta_{i,\aaa}(I_n)=\dim_\kk \widetilde H_{i-1}(\Delta_\aaa^{I_n}) \ne 0$ if and only if $i=\ell-1$, and we also have $\widetilde H_{\ell-2}(\Delta_\aaa^{I_n}) \cong \kk$.

{\bf Case (II).}
Suppose $\aaa=(p_k,q_{k+1}^\ell,0^{n-\ell-1})$.
Then
\begin{align*}
\Delta_\aaa^{I_n} 
&= \{F \subset [\ell+1]: x_1^{p_k-1} x_2^{q_{k+1}-1} \cdots x_{\ell+1}^{q_{k+1}-1} x^{[\ell+1]\setminus F} \in I_n\}\\
&=\{ F \subset [\ell+1]: 1 \not \in F \mbox{ or } ([\ell+1]\setminus F) \cap \{2,\dots,\ell+1\} \ne \emptyset\}\\
&=\{F \subset [\ell+1]: F \ne [\ell+1]\}
\end{align*}
is the boundary of the $\ell$-simplex.
Hence 
$\beta_{i,\aaa}(I_n)=\dim_\kk \widetilde H_{i-1}(\Delta_\aaa^{I_n}) \ne 0$ if and only if $i=\ell$, and we have $\widetilde H_{\ell-1}(\Delta_\aaa^{I_n}) \cong \kk$.
\end{proof}

\begin{example}
\label{2.6.1}
Consider the ideal $\LL \subset S_\infty$ generated by $(5,1)$ and $(2,2)$.
The previous proposition tells
\begin{align}
\label{2-ex}
&\big\{\big(i,(a_1,\dots,a_n)\big): \beta_{i,(a_1,\dots,a_n)}(J_n) \neq 0, a_1 \geq \cdots \geq a_n\big\}\\
\nonumber
&= \big\{ \big(i,(2,2^{i+1},0^{n-i-2})\big):i=0,1,\dots,n-2\big\}\\
\nonumber
&\hspace{16pt} \cup \big\{ \big(i,(5,1^{i+1},0^{n-i-2})\big):i=0,1,\dots,n-2\big\}\\
\nonumber
&\hspace{16pt} \cup \big\{ \big(i+1,(5,2^{i+1},0^{n-i-2})\big):i=0,1,\dots,n-2\big\}.
\end{align}
The $\ZZ$-graded version of the equation \eqref{2-ex} is nothing but the equation \eqref{0-1} in the introduction.
\end{example}

\section{Main results}

In Proposition \ref{2.6} we can see that,
when $\II$ is generated by partitions of length $2$,
if $I_n$ has an $i$th syzygy of degree $(a_1,\dots,a_n) \in \ZZ^n_{\geq 0}$ with $a_1 \geq \cdots \geq a_n \geq 1$,
then $I_{n+1}$ has an $(i+1)$th syzygy of degree $(a_1,\dots,a_n,a_n) \in \ZZ^{n+1}_{\geq 0}$.
This phenomenon actually happens to any symmetric monomial ideal.

\begin{proposition}
\label{3.3}
Let $\II \subset S_\infty$ be a symmetric monomial ideal generated by partitions of length $\leq m$, and let $I_n =\II \cap S_n$ for $n \geq 1$.
For any integer $n \geq m$ and vector $\aaa=(a_1,\dots,a_t,b,\dots,b) \in \ZZ^n_{\geq 0}$ with $a_1 \geq \cdots \geq a_t >b \geq 1$,
we have
$$\beta_{i,\aaa}(I_n) \ne 0 \ \Leftrightarrow \beta_{i+1,(\aaa,b)}(I_{n+1}) \ne 0.$$
\end{proposition}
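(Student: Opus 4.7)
By Lemma \ref{2.1} the statement is equivalent to
$\widetilde H_{i-1}(\Delta^{I_n}_\aaa) \neq 0 \iff \widetilde H_i(\Delta^{I_{n+1}}_{(\aaa,b)}) \neq 0$.
Set $\Gamma := \Delta^{I_{n+1}}_{(\aaa,b)}$, $\Gamma' := \Delta^{I_n}_\aaa$, and $\Gamma_0 := \{F \in \Gamma : n+1 \notin F\}$; the plan is to analyze $\Gamma$ via its link--star--delete decomposition at the vertex $n+1$. First I would establish $\lk_\Gamma(n+1) = \Gamma'$: the containment $\Gamma' \subset \lk_\Gamma(n+1)$ is immediate from $I_n \subset I_{n+1}$, while the reverse is an application of Lemma \ref{tech}. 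Indeed, the shape of $\aaa$ forces $a_k \geq b$ for all $k \leq m$ (if $k \leq t$ then $a_k \geq a_t > b$; if $t < k \leq m$ then $a_k = b$), so the exponent vector of a face in the link has its first $m$ coordinates dominating its $(n+1)$-th coordinate $b-1$, and the lemma allows dropping the $x_{n+1}$-factor to land in $I_n$. Consequently $\st_\Gamma(n+1)$ is a cone and hence acyclic.

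Next I would prove that the inclusion-induced map $\iota : \widetilde H_*(\Gamma') \to \widetilde H_*(\Gamma_0)$ vanishes by sandwiching $\Gamma'$ inside an acyclic intermediate. Since the presence of at least one trailing $b$ in $\aaa$ forces $n - t \geq 1$, pick any $j \in \{t+1, \ldots, n\}$ and form the cone $\tilde\Gamma := \Gamma' \cup \{F \cup \{j\} : F \in \Gamma'\}$, which is acyclic. The key verification is $\tilde\Gamma \subset \Gamma_0$: for $F \in \Gamma'$ with $j \notin F$, the monomial encoding $F \cup \{j\} \in \Gamma$ is $x^{\aaa - \mathbf{1}} x^{[n] \setminus F} \cdot x_{n+1}^b / x_j$, and the transposition $(j, n+1) \in \sym_{n+1}$ transforms it into $x^{\aaa - \mathbf{1}} x^{[n] \setminus F} \cdot x_{n+1}^{b-1}$, which lies in $I_{n+1}$ because $x^{\aaa - \mathbf{1}} x^{[n] \setminus F} \in I_n$; the symmetry of $I_{n+1}$ then puts the original monomial in $I_{n+1}$ as well. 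Lemma \ref{3.1} applied to the chain $\Gamma' \subset \tilde\Gamma \subset \Gamma_0$ yields $\iota = 0$.

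Applying Mayer--Vietoris to $\Gamma = \st_\Gamma(n+1) \cup \Gamma_0$ (with intersection $\Gamma'$), and combining $\widetilde H_*(\st_\Gamma(n+1)) = 0$ with $\iota = 0$, produces the short exact sequence
\[
0 \to \widetilde H_i(\Gamma_0) \to \widetilde H_i(\Gamma) \to \widetilde H_{i-1}(\Gamma') \to 0.
\]
The forward implication $\widetilde H_{i-1}(\Gamma') \neq 0 \Rightarrow \widetilde H_i(\Gamma) \neq 0$ is immediate from the surjection on the right. The converse, which I expect to be the main obstacle, reduces via this sequence to showing that $\widetilde H_i(\Gamma_0) \neq 0$ forces $\widetilde H_{i-1}(\Gamma') \neq 0$. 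My approach here is to identify $\Gamma_0 = \Delta^{I_n^{\#}}_\aaa$, where $I_n^{\#} := (I_{n+1} : x_{n+1}^b) \cap S_n$ is a symmetric monomial ideal of $S_n$ generated by partitions of length $\leq m$ (namely $\Lambda(\II)$ together with truncations of partitions in $\Lambda(\II)$ whose last part is at most $b$), and to close the argument by an induction on a complexity measure of $\Lambda(\II)$ that strictly decreases when passing to $I_n^\#$; this recursive combinatorial step is the heart of the proof and is presumably the content of Section 4.
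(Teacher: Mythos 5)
Your setup reproduces the paper's decomposition in different clothing: the paper splits $\Sigma:=\Delta^{I_{n+1}}_{(\aaa,b)}$ as $\Sigma'\cup\Delta^{[n+1]}$ where $\Delta=\Delta^{I_n}_\aaa$ and $\Sigma'=\Delta^{[t+1]}\cup\cdots\cup\Delta^{[n]}$; one checks $\Delta^{[n+1]}=\st_\Sigma(n+1)$ and $\Sigma'=\{F\in\Sigma: n+1\notin F\}$, so this is precisely your link--star--deletion split $\Gamma=\st_\Gamma(n+1)\cup\Gamma_0$ with $\lk_\Gamma(n+1)=\Gamma'$. Your verifications that $\lk_\Gamma(n+1)=\Gamma'$ (via Lemma~\ref{tech}) and that $\Gamma'\hookrightarrow\Gamma_0$ factors through the acyclic cone $\Gamma'^{[j]}$ are both correct and match the paper's computations, so the Mayer--Vietoris reduction to the one remaining implication
\[
\widetilde H_i(\Gamma_0)\ne 0 \ \Longrightarrow\ \widetilde H_{i-1}(\Gamma')\ne 0
\]
is sound and is exactly where the paper also lands.

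The gap is in the last paragraph, and it is the hard part, not a routine one. The implication above compares homology of $\Delta^{I_n^\#}_\aaa$ and $\Delta^{I_n}_\aaa$ at the \emph{same} multidegree $\aaa$ for two \emph{different} ideals, whereas Proposition~\ref{3.3} compares homology at multidegrees of different lengths for restrictions of the \emph{same} $\II$. An ``induction on a complexity measure of $\Lambda(\II)$'' that appeals to the proposition for $I_n^\#$ therefore does not reduce the claim to a smaller instance of what you are proving: you would instead need to formulate and prove an auxiliary lemma of a different shape. That auxiliary lemma is the actual content of Section~4 (Lemma~\ref{3.2}): if $\Delta$ is fixed by permutations on $[r]$ and $\Sigma=\Delta^{[1]}\cup\cdots\cup\Delta^{[r]}$, then $\widetilde H_i(\Sigma)\ne0$ forces $\widetilde H_{i-1}(\Delta)\ne0$. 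The piece you are missing is the structural identification $\Gamma_0=\Delta^{[t+1]}\cup\cdots\cup\Delta^{[n]}$ --- i.e., the deletion is a union of cones over $\Gamma'$, fixed by permutations on $\{t+1,\dots,n\}$ --- which is what allows an induction on the number of apexes $r=n-t$ and a delicate analysis of the intersection $\Sigma'\cap\Delta^{[r]}$ (the technical Claim inside the proof of Lemma~\ref{3.2}). Recasting $\Gamma_0$ as the Koszul complex of the colon ideal $(I_{n+1}:x_{n+1}^b)\cap S_n$, while correct, obscures this cone structure and does not by itself supply the needed inductive mechanism.
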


Since the proof of the above proposition is rather technical and purely combinatorial,
we postpone its proof to the next section.
In this section,
we prove main algebraic results of this paper using Proposition \ref{3.3}.

We first prove a $\ZZ^n$-graded version of our main result.
For a symmetric monomial ideal $I \subset S_n$,
we define
$$\mathcal B(I)=\big\{\big(i,(a_1,\dots,a_n)\big)\in \ZZ_{\geq 0} \times \ZZ_{\geq 0}^n:a_1 \geq \cdots \geq a_n \geq 0, \beta_{i,(a_1,\dots,a_n)} (I) \ne 0\big\}$$
and
$$\mathcal F(I)=\big\{\big(i,(a_1,\dots,a_n)\big)\in \ZZ_{\geq 0} \times \ZZ_{\geq 0}^n:a_1 \geq \cdots \geq a_n\geq 1, \beta_{i,(a_1,\dots,a_n)} (I) \ne 0\big\}.$$
Since the action of $S_n$ to $\Tor_i(I,\kk)$ permutes the $\ZZ^n$-grading of its elements,
the set $\mathcal B(I)$ determines the set
$\{(i,\aaa) \in \ZZ_{\geq 0} \times \ZZ^n_{\geq 0}:\beta_{i,\aaa}(I) \ne 0\}$
of nonzero positions of the $\ZZ^n$-graded Betti table of $I$.
Also, since Lemma \ref{2.3} tells 
$$\beta_{i,(a_1,\dots,a_{n-1},0)}(I) \ne 0
\Leftrightarrow
\beta_{i,(a_1,\dots,a_{n-1})}(I\cap S_{n-1}) \ne 0,$$
we have
\begin{align}
\label{3-3a}
\mathcal B(I)= \bigcup_{t=1}^n \big\{ \big(i,(a_1,\dots,a_t,0^{n-t})\big)\in \ZZ_{\geq 0} \times \ZZ^n_{\geq 0}:
\big(i,(a_1,\dots,a_t)\big) \in \mathcal F(I\cap S_t)\big\}.
\end{align}

Now consider a symmetric monomial ideal $\II \subset S_\infty$ and let $I_n=\II \cap S_n$ for  $n \geq 1$.
The equation \eqref{3-3a} tells that, for any integer $n \geq 1$, we have
$$\mathcal B(I_n)= \bigcup_{t=1}^n \big\{ \big(i,(a_1,\dots,a_t,0^{n-t})\big) : \big(i,(a_1,\dots,a_t)\big) \in \mathcal F(I_t)\big\}.$$
Moreover, Propositions \ref{2.4} and \ref{3.3} say that if $\II$ is generated by partitions of length $\leq m$, then we have
$$\mathcal F(I_{n+1})=\big\{\big(i+1,(a_1,\dots,a_n,a_n)\big)\in \ZZ_{\geq 0} \times \ZZ_{\geq 0}^{n+1}: \big(i,(a_1,\dots,a_n)\big) \in \mathcal F(I_n)\big\}$$
for $n \geq m$.
In particular, $\mathcal F(I_m)$ determines $\mathcal F(I_n)$ for all $n \geq m$.
These equations tell that the set $\mathcal B(I_n)$ is determined from the set $\mathcal F(I_1),\dots,\mathcal F(I_m)$ in the following form.

\begin{theorem}
\label{3.4}
Let $\II \subset S_\infty$ be a symmetric monomial ideal generated by partitions of length $\leq m$, and let $I_n=\II \cap S_n$ for $n \geq 1$.
Then, for any integer $n \geq m$,
\begin{align*}
\mathcal B(I_n) = &
\left( \bigcup_{t=1}^{m-1} \big\{ \big(i,(a_1,\dots,a_t,0^{n-t})\big) : \big(i,(a_1,\dots,a_t)\big) \in \mathcal F(I_t)\big\} \right)\\
 &\cup \!
\left( \bigcup_{k=m}^{n}\! \big\{ \big(i\!+\!k\!-\!m,(a_1, \dots,a_m,a_m^{k-m},0^{n-k})\big)\! :\! \big(i,(a_1,\dots,a_m)\big)\! \in\! \mathcal F(I_m)\big\}\! \right).
\end{align*}
\end{theorem}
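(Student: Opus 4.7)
The plan is to derive Theorem \ref{3.4} by combining the decomposition \eqref{3-3a} with an iterated use of Proposition \ref{3.3}. The discussion immediately preceding the theorem already contains the essential content of the argument; what remains is to package it into a clean induction on the number of variables.

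First I would reduce the theorem to establishing the following recursive description: for every integer $k \geq m$,
\[
\mathcal F(I_k) = \big\{\bigl(i + k - m,\, (a_1, \ldots, a_m, a_m^{k-m})\bigr) : \bigl(i, (a_1, \ldots, a_m)\bigr) \in \mathcal F(I_m)\big\}.
\]
I would prove this by induction on $k \geq m$, with the base case $k = m$ being tautological. For the inductive step, take $(j, \bb) \in \mathcal F(I_{k+1})$ with $\bb = (b_1, \ldots, b_{k+1})$ weakly decreasing and strictly positive. Because $k+1 > m$ and $\beta_{j,\bb}(I_{k+1}) \neq 0$, Proposition \ref{2.4} applied inside $S_{k+1}$ forces $b_m = b_{k+1}$, hence $b_m = b_{m+1} = \cdots = b_{k+1}$. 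In particular the last coordinate of $\bb$ coincides with the repeated tail of $(b_1, \ldots, b_k)$, so the vector $(b_1, \ldots, b_k)$ fits the hypothesis of Proposition \ref{3.3} (with the repeated value taken to be $b_{k+1}$), which then yields the equivalence
\[
(j, \bb) \in \mathcal F(I_{k+1}) \ \Leftrightarrow\ \bigl(j-1,\, (b_1, \ldots, b_k)\bigr) \in \mathcal F(I_k).
\]
Composing this bijection with the inductive hypothesis for $\mathcal F(I_k)$ delivers the claim for $k+1$.

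With the recursion in hand, I would plug it back into \eqref{3-3a} to finish the proof. The summands with $t \in \{1, \ldots, m-1\}$ contribute the first union in the statement verbatim, while for each $t = k \in \{m, \ldots, n\}$ the set $\mathcal F(I_k)$ is replaced by its description in terms of $\mathcal F(I_m)$, and the padding $(a_1, \ldots, a_k, 0^{n-k})$ becomes $(a_1, \ldots, a_m, a_m^{k-m}, 0^{n-k})$, yielding the second union. The only nontrivial input is Proposition \ref{3.3}, whose combinatorial proof is postponed to Section 4; once it is granted, the remaining work is pure index bookkeeping, and I do not expect any hidden obstacles in that step.
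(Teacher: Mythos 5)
Your proposal is correct and follows essentially the same route as the paper: the paper likewise observes that \eqref{3-3a} reduces the claim to the one-step recursion $\mathcal F(I_{k+1}) = \{(i+1,(a_1,\dots,a_k,a_k)) : (i,(a_1,\dots,a_k)) \in \mathcal F(I_k)\}$ for $k \geq m$, which it obtains from Propositions~\ref{2.4} and~\ref{3.3} and then iterates. Your induction on $k$ simply packages that iteration explicitly, and your use of Proposition~\ref{2.4} to force $b_m = b_{k+1}$ before invoking Proposition~\ref{3.3} is exactly the intended interplay of the two propositions.
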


\begin{example}
\label{3.5}
Consider the ideal $\LL$ generated by partitions $(5,1)$ and $(2,2)$ in the introduction.
In this case, we have
\begin{align*}
\mathcal F(J_1)&=\emptyset,\\
\mathcal F(J_2)&=\big\{\big(0,(2,2)\big),\big(0,(5,1)\big),\big(1,(5,2)\big)\big\},\\
\mathcal F(J_3)&=\big\{\big(1,(2,2,2)\big),\big(1,(5,1,1)\big),\big(2,(5,2,2)\big)\big\},\\
\mathcal F(J_4)&=\big\{\big(2,(2,2,2,2)\big),\big(2,(5,1,1,1)\big),\big(3,(5,2,2,2)\big)\big\},\\
&\  \ \vdots
\end{align*}
Also, the set $\mathcal B(J_n)$ is essentially a union of $\mathcal F(J_1),\mathcal F(J_2),\dots,\mathcal F(J_n)$ (if we ignore zeros in degrees). For example, when $n=4$ we have
$$
\mathcal B(J_4)=
\left\{
\begin{array}{cccccc}
\big(0,(2,2,0,0)\big),\big(0,(5,1,0,0)\big),\big(1,(5,2,0,0)\big),\\
\big(1,(2,2,2,0)\big),\big(1,(5,1,1,0)\big),\big(2,(5,2,2,0)\big),\\
\big(2,(2,2,2,2)\big),\big(2,(5,1,1,1)\big),\big(3,(5,2,2,2)\big)
\end{array}\right\}.$$
\end{example}

Now Theorem \ref{1.2} in the introduction is an easy consequence of the above theorem.

\begin{proof}[Proof of Theorem \ref{1.2}]
Theorem \ref{3.4} says
\begin{align*}
&\{(i,j): \beta_{i,i+j}(I_n) \ne 0\}\\
&= \big\{ (i,|\aaa| -i): (i,\aaa) \in \mathcal B(I_n)\big\}\\
&=\bigcup_{t=1}^{m-1} \big\{(i,|\aaa|-i): (i,\aaa) \in \mathcal F(I_t)\big\}\\
&\hspace{16pt}
\cup\!\!\!\! \bigcup_{(i,(a_1,\dots,a_m)) \in \mathcal F(I_m)} \!\!\!\!\!\! \big\{ (i\!+\!k\!-\!m, a_1\!+\! \cdots\! +\!a_m+a_m(k\!-\!m)\!-\!i\!-\!k\!+\!m): m \leq k \leq n\big\}\\
&= \big\{(i,j): \beta_{i,i+j}(I_{m-1})\ne 0\big\}\\
&\hspace{16pt} \cup \bigcup_{(i,(a_1,\dots,a_m)) \in \mathcal F(I_m)} \big\{ (i+k, a_1\!+\! \cdots\! +\!a_m-i+(a_m-1)k): 0 \leq k \leq n-m\big\},
\end{align*}
where we use $\mathcal B(I_{m-1})= \bigcup_{t=1}^{m-1} \big\{ \big(i,(\aaa,0^{m-1-t})\big): (i,\aaa) \in \mathcal F(I_t)\big\}$
for the last equality.
By setting 
$$D=\big\{(i,a_1+\cdots+a_m-i,a_m-1):\big(i,(a_1,\dots,a_m)\big) \in \mathcal F(I_m)\big\},$$
we get the desired statement.
\end{proof}

Another formulation of Theorem \ref{3.4} is
{\small
\begin{align*}
\mathcal B(I_n) = &
\big\{ \big(i,(\aaa,0^{n-m})\big): (i,\aaa) \in \mathcal B(I_m) \setminus \mathcal F(I_m) \big\}\\
 &\cup
\left( \bigcup_{k=m}^{n} \big\{ \big(i+k-m,(a_1,\dots,a_m,a_m^{k-m},0^{n-k})\big) : \big(i,(a_1,\dots,a_m)\big) \in \mathcal F(I_m)\big\} \right).
\end{align*}
}This formula enable us to 
determine the shape of the Betti table of $I_n$ from $\ZZ^n$-graded Betti numbers of $I_m$.
Below we give two examples.

\begin{example}
\label{3.6}
Let $m \geq 1$ be an integer and let $\mathcal T \subset S_\infty$ be the symmetric monomial ideal
generated by $m$ partitions
$$(1^m),(2^{m-1}),(3^{m-2}),\dots,(m^1).$$
Let $T_n=\mathcal T \cap S_n$ for $n \geq 1$.
The ideal $T_m$ is called a tree ideal,
and it is known that the Scarf complex gives a minimal free resolution of $T_m$ (see \cite[Example 1.2 and Theorem 1.5]{MSY}).
This tells that
the set $\{(i,\aaa):\beta_{i,\aaa}(T_m) \ne 0\}$ equals to
\begin{align*}
\{ (|F|\!-\!1,\mathrm{lcm}(F))\!:\! F \!\subset\! G(T_m),\ \mathrm{lcm}(F) \!\ne\! \mathrm{lcm}(F') \mbox{ for any } F'\! \subset\! G(I) \mbox{ with } F'\! \ne\! F \}.
\end{align*}
When $n >m$,
a minimal free resolution of $T_n$ cannot be given by the Scarf complex.
However, one can determine the shape of its Betti table by using Theorem \ref{3.4}.
For example, when $m=4$ we have
{\small
\begin{align*}
\mathcal B(T_4)
= \left\{
\begin{array}{lll}
\big(0,(4,0,0,0)\big),\big(0,(3,3,0,0)\big),\big(0,(2,2,2,0)\big),\big(0,(1,1,1,1)\big),\\
%\big(1,(4,3,0^2)\big),\big(1,(4,2^2,0)\big),\big(1,(4,1^3)\big),\big(1,(3^2,2,0)\big),\big(1,(3^2,1^2)\big),\big(1,(2^3,1)\big)\\
\big(1,(4,3,0,0)\big),\big(1,(4,2,2,0)\big),\big(1,(4,1,1,1)\big),\\
\big(1,(3,3,2,0)\big),\big(1,(3,3,1,1)\big),\big(1,(2,2,2,1)\big),\\
\big(2,(4,3,2,0)\big),\big(2,(4,3,1,1)\big),\big(2,(4,2,2,1)\big),\big(2,(3,3,2,1)\big),\\\big(3,(4,3,2,1)\big)
\end{array}
\right\}
\end{align*} 
}We have $7$ elements in $\mathcal B(T_4) \setminus \mathcal F(T_4)$ each of which contributes a single position in the Betti table of $I_n$, and we have $8$ elements in $\mathcal F(T_4)$ each of which creates a line segment of length $n-4$ in the Betti table.
See Figure \ref{fig4}.

\begin{figure}[h]
\includegraphics[scale=0.28]{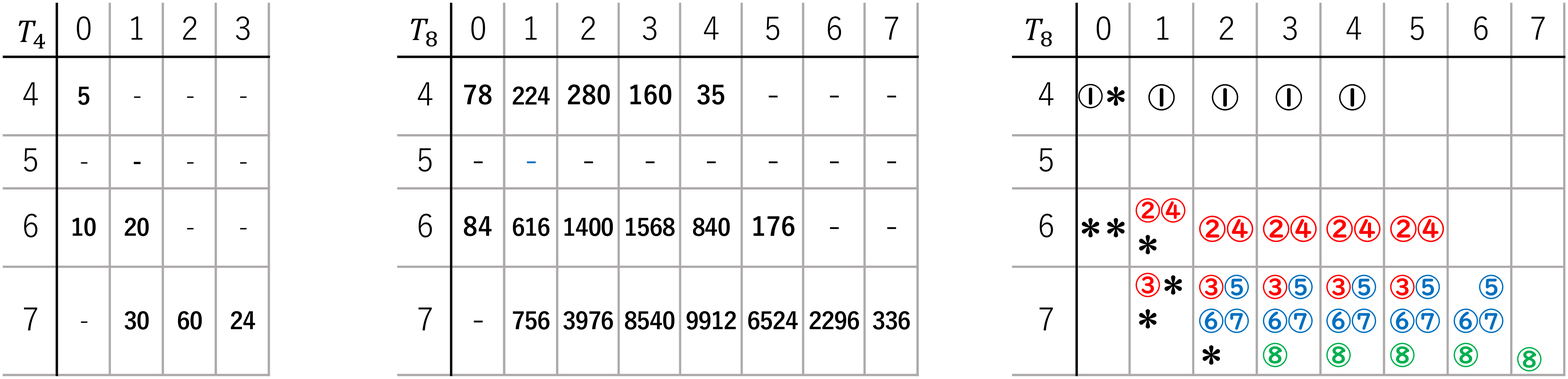}
\caption{Betti tables of $T_4$ and $T_8$. Each numbered circle represents a line segment created by an element of $\mathcal F(T_4)$. The positions where elements of $\mathcal B(T_4)\setminus \mathcal F(T_4)$ contribute are marked by $*$.}
\label{fig4}
\end{figure}

\end{example}

\begin{example}
\label{3.7}
The situation becomes simpler when $\II$ is generated by partitions of the same length $m$ since in such a case we have $\mathcal B(I_m)=\mathcal F(I_m)$.
For example, let $\mathcal P \subset S_\infty$ be the symmetric monomial ideal generated by a single partition $(m,m-1,\dots,2,1)$ and let $P_n=\mathcal P \cap S_n$ for $n \geq 1$.
The ideal $P_m$ is known as a permutohedron ideal and its minimal free resolution is given by the co-Scarf complex (see \cite[Example 4.2 and Theorem 4.6]{MSY}), which tells
\[
\mathcal B(P_m)=\big \{ \big(m-i-1, (m,\dots,m)-\aaa^* \big) : \aaa \in \mathcal F(T_m)\big\},
\]
where $(a_1,a_2,\dots,a_n)^*=(a_n,\dots,a_2,a_1)$.
For example, when $m=4$, we have
{\small
\begin{align*}
\mathcal B(\mathcal P_4)
= \left\{
\begin{array}{lll}
\big(0,(4,3,2,1)\big),
\big(1,(4,4,2,1)\big),\big(1,(4,3,3,1)\big),\big(1,(4,3,2,2)\big),\\
\big(2,(4,4,4,1)\big),\big(2,(4,4,2,2)\big),\big(2,(4,3,3,3)\big),\big(3,(4,4,4,4)\big)
\end{array}
\right\}.
\end{align*}
}Theorem \ref{3.4} tells that the Betti table of $P_n$ is a union of $8$ line segments of length $n-4$ created by $8$ elements in $\mathcal F(P_4)=\mathcal B(P_4)$.
See Figure \ref{fig5}.
\begin{figure}
\includegraphics[scale=0.42]{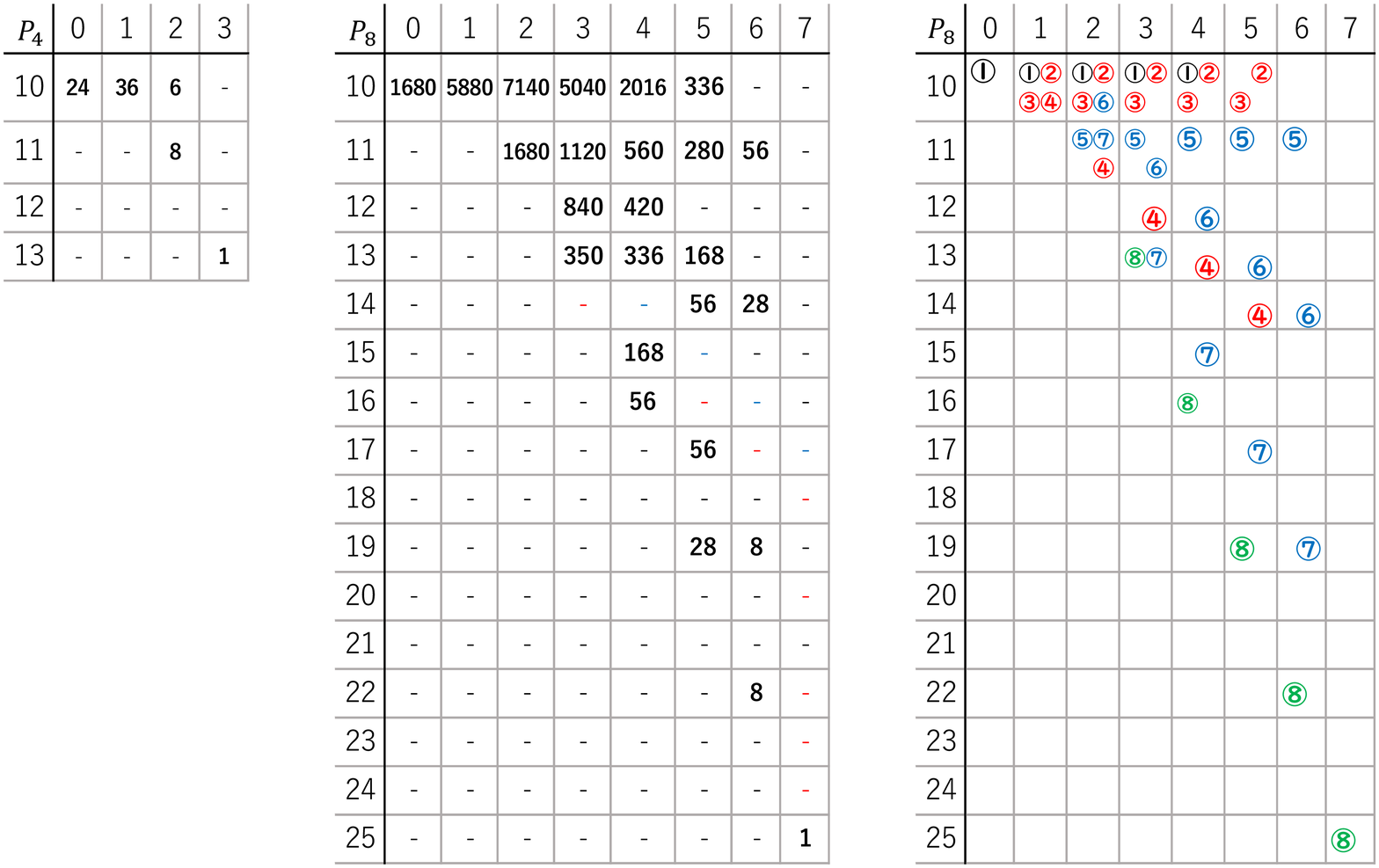}
\caption{Betti tables of $P_4$ and $P_8$. Numbered circles represent $8$ line segments created by elements of $\mathcal B(P_4)=\mathcal F(P_4)$.}
\label{fig5}
\end{figure}
\end{example}

Finally,
we discuss projective dimension and regularities of symmetric monomial ideals.
Recall that, for a homogeneous ideal $I \subset S_n$, the {\bf projective dimension} of $I$ is the number
$$\pd(I)=\max\{i: \beta_{i,j}(I) \ne 0 \mbox{ for some $j$}\}$$
and the {\bf Castelnuovo-Mumford regularity} of $I$ is the number
$$\reg(I)=\max\{j: \beta_{i,i+j}(I) \ne 0 \mbox{ for some $i$}\}.$$
We first prove the following statement which is analogous to Proposition \ref{3.3} but does not have any assumption on length of generators.

\begin{proposition}
\label{3.8}
Let $\II \subset S_\infty$ be a symmetric monomial ideal and $I_n = \II \cap S_n$ for $n \geq 1$.
Let $\aaa=(a_1,\dots,a_n) \in \ZZ^n_{\geq 0}$ with $a_1 \geq a_2 \geq \cdots \geq a_n \geq 1$.
If $\beta_{i,\aaa}(I_n) \ne 0$ then $\beta_{i+1,(\aaa,k)}(I_{n+1}) \ne 0$ for some $1\leq k \leq a_n$.
\end{proposition}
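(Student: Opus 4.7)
The plan is to translate the statement into a question about simplicial homology via Lemma \ref{2.1} and settle it with a Mayer--Vietoris argument. For $k \geq 1$, set
\[
\Sigma_k = \bigl\{F \subset [n] : x_1^{a_1-1}\cdots x_n^{a_n-1} x_{n+1}^{k-1} x^{[n]\setminus F} \in I_{n+1}\bigr\},
\]
and let $\Gamma_k = \Delta^{I_{n+1}}_{(\aaa,k)}$. Partitioning the faces of $\Gamma_k$ by whether they contain $n+1$, one obtains a decomposition $\Gamma_k = A_k \cup B_k$ in which $A_k = \Sigma_{k+1}$ (as the subcomplex on faces not containing $n+1$) and $B_k$ is the cone with apex $n+1$ over $\Sigma_k$; their intersection is $\Sigma_k$, and multiplication by $x_{n+1}$ gives $\Sigma_k \subset \Sigma_{k+1}$. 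Since $B_k$ is acyclic, Mayer--Vietoris produces the long exact sequence
\[
\cdots \to \widetilde H_i(\Sigma_{k+1}) \to \widetilde H_i(\Gamma_k) \to \widetilde H_{i-1}(\Sigma_k) \xrightarrow{\phi_k} \widetilde H_{i-1}(\Sigma_{k+1}) \to \cdots,
\]
with $\phi_k$ the map induced by inclusion. Since $\Sigma_1 = \Delta^{I_n}_\aaa$, the hypothesis combined with Lemma \ref{2.1} gives $\widetilde H_{i-1}(\Sigma_1) \ne 0$.

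I will argue by contradiction: if $\widetilde H_i(\Gamma_k) = 0$ for every $1 \leq k \leq a_n$, then each $\phi_k$ is injective, so the composed map $\widetilde H_{i-1}(\Sigma_1) \to \widetilde H_{i-1}(\Sigma_{a_n+1})$ is injective. The central task is then to exhibit an acyclic simplicial complex $T$ with $\Sigma_1 \subset T \subset \Sigma_{a_n+1}$; by Lemma \ref{3.1} this composition would factor through zero, contradicting $\widetilde H_{i-1}(\Sigma_1) \ne 0$.

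The bridge I propose is $T = \Sigma_1 \cup \{F \cup \{n\} : F \in \Sigma_1,\ n \notin F\}$, which is a cone with apex $n$ and therefore acyclic. The containment $\Sigma_1 \subset \Sigma_{a_n+1}$ follows from multiplying the defining monomial for $F \in \Sigma_1$ by $x_{n+1}^{a_n}$. The delicate step is verifying that each new face $F \cup \{n\}$ (with $F \in \Sigma_1$, $n \notin F$) lies in $\Sigma_{a_n+1}$. Rewriting the $\Sigma_1$-condition using $n \in [n]\setminus F$, I obtain $x_1^{a_1-1}\cdots x_{n-1}^{a_{n-1}-1} x_n^{a_n} x^{[n-1]\setminus F} \in I_{n+1}$. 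Applying the transposition $(n,n+1)$, which fixes every other factor because $x^{[n-1]\setminus F}$ involves only $x_1,\dots,x_{n-1}$, the symmetry of $I_{n+1}$ yields $x_1^{a_1-1}\cdots x_{n-1}^{a_{n-1}-1} x_{n+1}^{a_n} x^{[n-1]\setminus F} \in I_{n+1}$, and multiplying by $x_n^{a_n-1}$ produces exactly the monomial needed to certify $F \cup \{n\} \in \Sigma_{a_n+1}$.

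The main obstacle is locating the bridge $T$; the decisive ingredient is the interplay between the symmetry of $I_{n+1}$ (to transfer the $a_n$-th power from $x_n$ to $x_{n+1}$) and multiplication (to restore the $(a_n-1)$-th power on $x_n$). The bound $k \leq a_n$ in the statement is precisely what this construction allows, and the hypothesis $a_n \geq 1$ is used exactly so that $x_n^{a_n - 1}$ makes sense as a monomial.
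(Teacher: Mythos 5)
Your proof is correct and takes essentially the same approach as the paper: you build the same chain of simplicial complexes $\Sigma_1 \subset \cdots \subset \Sigma_{a_n+1}$ (the paper's $\Delta^{(0)} \subset \cdots \subset \Delta^{(a_n)}$), exhibit the same cone $T = \Sigma_1^{[n]}$ sitting between the ends via the same transposition argument, and extract the conclusion from the same acyclicity observation. Your use of Mayer--Vietoris plus a proof by contradiction is cosmetically different from the paper's long exact sequence of the pair $(\Delta^{I_{n+1}}_{(\aaa,k)}, \Delta^{(k)})$ plus a direct chase for the first $k$ where $\alpha_k = 0$, but the two are equivalent here.
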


\begin{proof}
Let 
$$\Delta^{(k)}= \{F \subset [n]: x^\ua \cdot x_{n+1}^k \cdot  x^{[n] \setminus F} \in I_{n+1}\}$$
for $k=0,1,\dots,a_n$
and $\Delta=\Delta^{(0)}$.
Also, let $\Sigma=\Delta \cup \{\{n\} \cup F: F \in \Delta\}$.
The definition tells
\begin{itemize}
\item[(a)] $\Delta=\Delta^{(0)}=\Delta_\aaa^{I_n}$.
\item[(b)] $\Delta^{(0)} \subset \Delta^{(1)} \subset \cdots \subset \Delta^{(a_n)}$.
\item[(c)]  $\Delta_{(\aaa,k)}^{I_{n+1}}= \Delta^{(k)} \cup \{ \{n+1\} \cup F: F \in \Delta^{(k-1)}\}$.
\end{itemize}
Also, we have the inclusion
\begin{itemize}
\item[(d)] $\Delta^{(a_n)} \supset \Sigma$.
\end{itemize}
To see (d), it is enough to show that $F \cup \{n\} \in \Delta^{(a_n)}$ for any $F \in \Delta$ with $n \not \in F$.
Indeed, if $F \in \Delta$ with $n \not \in F$,
then $x^\ua \cdot x^{[n] \setminus F} \in I_n$
and we get
$$ x^\ua \cdot x^{[n] \setminus F} \cdot x_{n+1}^{a_n-1}
=x_1^{a_1-1} \cdots x_{n-1}^{a_{n-1}-1} \cdot  x^{[n-1] \setminus F} \cdot x_n^{a_n} x_{n+1}^{a_n-1} \in I_{n+1}.$$
Then by exchanging $x_n$ and $x_{n+1}$ using the symmetry of $I_{n+1}$ we have
$$ x^\ua \cdot x^{[n] \setminus (F \cup \{n\})} \cdot x_{n+1}^{a_n}
=x_1^{a_1-1} \cdots x_{n-1}^{a_{n-1}-1} \cdot x^{[n-1] \setminus F} \cdot x_n^{a_n-1} x_{n+1}^{a_n} \in I_{n+1},$$
which implies $F \cup \{n\} \in \Delta^{(a_n)}$.

Recall that we assume $\widetilde H_{i-1}(\Delta) \cong \Tor_i(I_n,\kk)_{\aaa} \ne 0$.
Take a nonzero element $\alpha \in \widetilde H_{i-1}(\Delta)$.
Consider the maps
$$\widetilde H_{i-1}(\Delta^{(0)}) \stackrel {\iota_1}
\longrightarrow
\widetilde H_{i-1}(\Delta^{(1)}) \stackrel {\iota_2}
\longrightarrow
\cdots \stackrel {\iota_{a_n}}
\longrightarrow
\widetilde H_{i-1}(\Delta^{(a_n)})$$
induced by the inclusion (b),
and let $\alpha_i=\iota_i\circ \cdots \circ \iota_1(\alpha)$.
Then (d) and Lemma \ref{3.1} say that the composition $\iota_{a_n} \circ \cdots \circ \iota_{1}$ is a zero map, so $\alpha_{a_n}=0$.
Hence there is an integer $ 1 \leq k \leq a_n$ such that
$\alpha_k=0$ but
$\alpha_{k-1}$ is nonzero, where $\alpha_0=\alpha$.
On the other hand, the equation (c) gives a long exact sequence of the pair $(\Delta^{I_{n+1}}_{(\aaa,k)},\Delta^{(k)})$ (see e.g.\ \cite[\S4.5 ]{Sp})
\begin{align*}
\cdots
\longrightarrow
\widetilde H_{i}(\Delta_{(\aaa,k)}^{I_{n+1}})
\stackrel {\eta} \longrightarrow
H_{i}(\Delta_{(\aaa,k)}^{I_{n+1}},\Delta^{(k)})
\cong \widetilde H_{i-1}(\Delta^{(k-1)})
\stackrel {\rho} \longrightarrow
\widetilde H_{i-1}(\Delta^{(k)})
\longrightarrow\cdots.
\end{align*}
By a routine diagram chase computation, we can see that the image of $\alpha_{k-1} \in \widetilde H_{i-1}(\Delta^{(k-1)})$ by the map $\rho$ is nothing but $\pm \alpha_k=\pm \iota_k(\alpha_{k-1})$,
so $\alpha_{k-1}$ is contained in the kernel of $\rho$.
Hence $\eta$ is not a zero map
and we get $\beta_{i+1,(\aaa,k)}(I_{n+1}) = \dim _\kk \widetilde H_i(\Delta^{I_{n+1}}_{(\aaa,k)}) \ne 0$.
\end{proof}

Let $\II \subset S_\infty$ be a nonzero proper symmetric monomial ideal generated by partitions of length $\leq m$, and let $I_n=\II \cap S_n$ for $n \geq 1$.
Theorem \ref{1.2} says that $\pd(I_{n+1}) \leq \pd(I_n)+1$ for $n \geq m$.
On the other hand, Proposition \ref{3.8} tells that $\beta_{i,(a_1,\dots,a_t,0,\dots,0)}(I_n) \ne 0$ with $a_1,\dots,a_t \ne 0$ implies $\beta_{i,(a_1,\dots,a_t,k,0,\dots,0)}(I_{n+1}) \ne 0$ for some $k \geq 1$ (use Lemma \ref{2.3}(i) when $t \ne n$).
This in particular shows $\pd(I_{n+1}) \geq \pd (I_n)+1$ when $I_n \ne \{0\}$ and $I_n \ne S_n$.
Hence we get the next statement.

\begin{corollary}
\label{3.9}
With the same notation as above, $\pd(I_{n+1})=\pd(I_n)+1$ for $n \geq m$.
\end{corollary}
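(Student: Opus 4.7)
The plan is to establish the equality by a sandwich argument: prove $\pd(I_{n+1})\leq\pd(I_n)+1$ by inspecting the structural description provided by Theorem \ref{1.2}, and prove $\pd(I_{n+1})\geq\pd(I_n)+1$ via Proposition \ref{3.8}. Since both ingredients are already in hand, I expect the argument to amount to bookkeeping rather than new combinatorial work.

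For the upper bound, I would read off from Theorem \ref{1.2} that, for $n\geq m$, the nonzero positions of the Betti table of $I_n$ form the union of line segments $\mathcal L((i_0,j_0),c,n-m)$ indexed by a fixed set $D$ together with the fixed set $\{(i,j):\beta_{i,i+j}(I_{m-1})\ne 0\}$. Passing from $n$ to $n+1$ replaces each line segment of length $n-m$ by one of length $n+1-m$ with the same starting point and slope, and leaves the $I_{m-1}$-contribution untouched. The maximum $i$-coordinate of the line-segment part therefore grows by exactly one while the $I_{m-1}$-part is constant; taking the overall maximum yields $\pd(I_{n+1})\leq\pd(I_n)+1$.

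For the lower bound, I would pick $i=\pd(I_n)$ and choose $\aaa\in\ZZ_{\geq 0}^n$ with $\beta_{i,\aaa}(I_n)\ne 0$. Since the $\sym_n$-action on $\Tor_i(I_n,\kk)$ permutes the $\ZZ^n$-grading, after reordering I may assume $a_1\geq\cdots\geq a_n$. Setting $t=|\supp(\aaa)|$, so that $a_t\geq 1$ and $a_{t+1}=\cdots=a_n=0$, iterated use of Lemma \ref{2.3}(i) gives $\beta_{i,(a_1,\dots,a_t)}(I_t)\ne 0$, at which point Proposition \ref{3.8} applied to $I_t\subset S_t$ produces an integer $1\leq k\leq a_t$ with $\beta_{i+1,(a_1,\dots,a_t,k)}(I_{t+1})\ne 0$. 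Padding with zeros and invoking Lemma \ref{2.3}(i) in reverse then yields $\beta_{i+1,(a_1,\dots,a_t,k,0,\dots,0)}(I_{n+1})\ne 0$, whence $\pd(I_{n+1})\geq i+1=\pd(I_n)+1$.

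Combining the two bounds gives the claimed equality. I do not anticipate any serious obstacle, since the essential content is already packaged in Theorem \ref{1.2} and Proposition \ref{3.8}; the only mild point of care is the migration of multidegrees between $S_t$, $S_{t+1}$ and $S_{n+1}$ via Lemma \ref{2.3}(i), which is immediate from the identification $I_n\cap S_{n-1}=I_{n-1}$.
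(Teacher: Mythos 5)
Your proposal is correct and follows the same route as the paper: the upper bound $\pd(I_{n+1})\le\pd(I_n)+1$ is read off from the line-segment description in Theorem \ref{1.2}, and the lower bound comes from Proposition \ref{3.8} combined with Lemma \ref{2.3}(i) to shuttle multidegrees between $S_t$, $S_{t+1}$, and $S_{n+1}$. Your write-up is a bit more explicit about the bookkeeping (and correctly records the homological shift from $\beta_i$ to $\beta_{i+1}$, which the paper's one-line summary abbreviates), but the content is identical.
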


The above corollary also tells that for a symmetric monomial ideal $I \subset S_n$,
the Cohen--Macaulay property of $S_n/I$ only depends on the set $\Lambda(I)$.
Recall that, for a homogeneous ideal $I \subset S_n$,
the {\bf depth} of $S_n/I$ is the number $\mathrm{depth}(S_n/I)=n-1-\pd(I)$,
and we say that $S_n/I$ is {\bf Cohen--Macaulay} if the Krull dimension of $S_n/I$ is equal to its depth.

\begin{corollary}
\label{3.10}
Let $I \subset S_n$ and $J \subset S_m$ be symmetric monomial ideals with $\Lambda(I)=\Lambda(J)$.
Then $S_n/I$ is Cohen--Macaulay if and only if $S_m/J$ is Cohen--Macaulay.
\end{corollary}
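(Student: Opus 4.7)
The idea is to establish that both $\dim(S_k/(\II \cap S_k))$ and $\depth(S_k/(\II \cap S_k))$ depend only on $\Lambda = \Lambda(\II)$, and not on $k$ (as long as $k$ is at least the maximum length of a partition in $\Lambda$). Applied with $\Lambda = \Lambda(I) = \Lambda(J)$ and $\II \subset S_\infty$ the corresponding symmetric monomial ideal, so that $I = \II \cap S_n$ and $J = \II \cap S_m$, this immediately yields that $S_n/I$ is Cohen--Macaulay iff $S_m/J$ is. The case $\Lambda = \emptyset$ is trivial (both ideals are zero), so I would assume $\Lambda$ is nonempty and let $\ell_{\min}$ and $\ell_{\max}$ denote the minimum and maximum lengths of the partitions in $\Lambda$; by the definition of $\Lambda(I)$ and $\Lambda(J)$ we have $n, m \geq \ell_{\max}$.

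For the depth, I would invoke Corollary \ref{3.9} directly: for every $k \geq \ell_{\max}$, $\pd(\II \cap S_{k+1}) = \pd(\II \cap S_k) + 1$, so by the Auslander--Buchsbaum formula
\[\depth(S_k/(\II \cap S_k)) = k - 1 - \pd(\II \cap S_k)\]
is independent of $k$.

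For the Krull dimension, I would compute the height of $\II \cap S_k$ via the minimum-vertex-cover description of minimal primes of monomial ideals. Since $\Lambda$ contains a partition of length $\ell_{\min}$, every $\ell_{\min}$-element subset of $[k]$ arises as the support of a minimal generator of $\II \cap S_k$; a monomial prime $(x_{i_1},\dots,x_{i_p})$ contains $\II \cap S_k$ iff every such subset meets $\{i_1,\dots,i_p\}$, equivalently iff $[k]\setminus\{i_1,\dots,i_p\}$ has size strictly less than $\ell_{\min}$, giving $p \geq k - \ell_{\min} + 1$. This bound is attained by $(x_1,\dots,x_{k-\ell_{\min}+1})$, and it automatically handles the generators coming from longer partitions since a set of size $< \ell_{\min}$ contains no subset of size $\ell' \geq \ell_{\min}$. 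Hence $\dim(S_k/(\II \cap S_k)) = \ell_{\min} - 1$, again independent of $k$.

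Combining these two observations, $\dim - \depth$ for $S_k/(\II \cap S_k)$ is a function of $\Lambda$ only, which proves the corollary. The argument is essentially routine once Corollary \ref{3.9} is in hand; the only non-automatic step is the height computation, and the main obstacle there is simply to notice that the shortest partitions in $\Lambda$ already determine the height, so one does not need to separately analyze the supports coming from longer partitions.
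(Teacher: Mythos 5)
Your proposal is correct and follows essentially the same route as the paper: depth is handled via Corollary~\ref{3.9} (through Auslander--Buchsbaum), and the Krull dimension is shown to equal $\ell_{\min}-1$ where $\ell_{\min}$ is the minimum partition length in $\Lambda(I)$. The only cosmetic difference is that you compute the height directly via vertex covers, whereas the paper passes to the radical $\sqrt{I}$, identifies it as the ideal generated by all squarefree monomials of degree $\ell_{\min}$, and cites the known dimension of the corresponding Stanley--Reisner ring; the underlying observation is identical.
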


\begin{proof}
Corollary \ref{3.9} tells that
the depth of $S_n/I$ only depends on $\Lambda(I)$.
Thus it is enough to check that the Krull dimension of $S_n/I$ only depends on $\Lambda(I)$.
Let $r$ be the smallest length of partitions in $\Lambda(I)$.
Then the radical $\sqrt I$ of $I$ is the symmetric monomial ideal generated by a single partition $(1^r)$,
in other words,
$\sqrt I$ is the ideal generated by all squarefree monomials of degree $r$.
Thus the Krull dimension of $S_n/\sqrt I$ is equal to $r-1$ (see \cite[II Theorem 1.3]{St}),
which must coincide with the Krull dimension of $S_n/I$.
Thus the Krull dimension of $S_n/I$ only depends on $\Lambda(I)$.
\end{proof}

Next, we discuss Castelnuovo-Mumford regularity.
The following result is a more precise version of Corollary \ref{1.3} and solves a special case of \cite[Conjecture 3.8]{LNNR1}.

\begin{proposition}
\label{3.11}
Let $\II \subset S_\infty$ be a symmetric monomial ideal and let $I_n=\II \cap S_n$ for $n \geq 1$.
Let $w= \min\{\lambda_1: (\lambda_1,\dots,\lambda_k) \in \Lambda(\II)\}$.
There is an integer $C>0$ such that
\[
\reg(I_n)= (w-1)n +C \ \ \ \mbox{ for }n \gg 0.
\]
\end{proposition}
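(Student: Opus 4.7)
The plan is to read off the leading behaviour of $\reg(I_n)$ from the explicit description of the set $D$ in Theorem \ref{1.2}, then pin down the maximal slope among the line segments by a direct combinatorial analysis of $\mathcal F(I_m)$.

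Fix $m$ at least the maximum length of partitions in $\Lambda(\II)$. The proof of Theorem \ref{1.2} shows
\[
D = \big\{\big(i,\ a_1+\cdots+a_m - i,\ a_m-1\big) : \big(i,(a_1,\dots,a_m)\big) \in \mathcal F(I_m)\big\},
\]
and so for $n \geq m$,
\[
\reg(I_n) = \max\!\left(\reg(I_{m-1}),\ \max_{(i,j,c) \in D}\big[j+c(n-m)\big]\right).
\]
Setting $c_{\max} = \max\{c : (i,j,c) \in D\}$, this right-hand side is eventually linear in $n$ with slope $c_{\max}$. Thus the proposition reduces to proving $c_{\max} = w-1$, equivalently $\max\{a_m : (i,\aaa) \in \mathcal F(I_m)\} = w$.

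For the upper bound $a_m \leq w$, I would suppose $\beta_{i,\aaa}(I_m) \neq 0$ with $\aaa = (a_1,\dots,a_m)$, $a_1 \geq \cdots \geq a_m \geq 1$, and $a_m > w$. Picking $\mu \in \Lambda(\II)$ with $\mu_1 = w$, the partition $\mu$ has length at most $m$ and all entries at most $w$ by choice of $m$. Because $a_j > w \geq \mu_i$ for all $i,j$ in range, the monomial $x^\mu$ divides $x^\aaa$ with quotient still supported on $[m] = \supp(\aaa)$, so Lemma \ref{2.2}(ii) forces $\beta_{i,\aaa}(I_m) = 0$, a contradiction. For the lower bound, I would analyze $\Delta := \Delta^{I_m}_{(w,\dots,w)}$ directly. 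A minimal generator $\sigma(x^\lambda)$ of $I_m$ can divide a monomial whose exponents are all at most $w$ only if $\lambda_1 = w$, and then the positions $\sigma(j)$ carrying $\lambda_j = w$ must lie outside $F$. Setting
\[
w^{\ast} = \min\{\#\{j : \lambda_j = w\} : \lambda \in \Lambda(\II),\ \lambda_1 = w\} \geq 1,
\]
a routine check gives $\Delta = \{F \subseteq [m] : |F| \leq m - w^{\ast}\}$, the $(m-w^{\ast}-1)$-skeleton of the $(m-1)$-simplex on $[m]$. This skeleton is a wedge of spheres with nonzero top reduced homology, so Lemma \ref{2.1} yields $\beta_{m-w^{\ast},\,(w,\dots,w)}(I_m) \neq 0$, realizing $a_m = w$ as desired.

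Combining the two bounds, $\reg(I_n) = (w-1)n + C$ for $n \gg 0$ with $C = J - (w-1)m$, where $J$ is the maximum of $j$ over $(i,j,w-1) \in D$. For positivity, the element $(m-w^{\ast},(w,\dots,w)) \in \mathcal F(I_m)$ produced above contributes $j = mw - (m - w^{\ast}) = (w-1)m + w^{\ast}$, which gives $C \geq w^{\ast} \geq 1$. The main obstacle is the explicit combinatorial identification of $\Delta^{I_m}_{(w,\dots,w)}$ with a skeleton of a simplex; once that is settled the remainder is bookkeeping with Theorem \ref{1.2} and the vanishing criteria of Section 2.
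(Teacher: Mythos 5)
Your proposal is correct and follows the same strategy as the paper's proof: derive the slope $W$ from the line-segment description (equivalently, from Theorem~\ref{3.4} via the formula \eqref{3-9-1}), prove the upper bound $W \le w-1$ by applying Lemma~\ref{2.2}(ii) with a generator whose largest part is $w$, and prove the lower bound $W \ge w-1$ by identifying $\Delta^{I_m}_{(w,\dots,w)}$ with a skeleton of the $(m-1)$-simplex. The one cosmetic difference is that the paper parametrizes the skeleton by $p=\min\{k : x_1^w\cdots x_k^w x_{k+1}^{w-1}\cdots x_m^{w-1}\in I_m\}$, while you unpack this into the explicit combinatorial quantity $w^{\ast}=\min\{\#\{j:\lambda_j=w\}:\lambda\in\Lambda(\II),\ \lambda_1=w\}$; a short check (which you supply in outline) shows $p=w^{\ast}$, so the two are identical. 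You are slightly more careful than the paper about the positivity of $C$: the paper does not explicitly spell out $C>0$, whereas your observation that the element $(m-w^{\ast},(w,\dots,w))\in\mathcal F(I_m)$ forces $C\ge w^{\ast}\ge 1$ fills that small gap cleanly.
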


\begin{proof}
Suppose that $\II$ is generated by partitions of length $\leq m$.
Theorem \ref{3.4} tells that there are integers $W,C$ such that
$\reg(I_n)=Wn+C$ for $n \gg 0$,
and
\begin{align}
\label{3-9-1}
W=\max\{a_m:\beta_{i,(a_1,\dots,a_m)}(I_m) \ne 0 \mbox{ for some $i$ and $a_1 \geq \cdots \geq a_m \geq 1$}\}-1.
\end{align}

We will prove $W=w-1$.
We first show $W \leq w-1$ (this fact was actually proved in \cite[Corollary 3.5]{LNNR1} in more general setting, but we include its proof).
Since $\Lambda(I_n)$ contains a partition $\lambda=(\lambda_1,\dots,\lambda_k)$
with $w \geq \lambda_1 \geq \cdots \geq \lambda_k$ and $k \leq m$,
any vector $\aaa=(a_1,\dots,a_m)$ with
$a_1 \geq \cdots \geq a_m \geq w+1$ satisfies $\supp(x^\aaa/x^\lambda)=\supp(x^\aaa/x^\lambda)$.
Thus by Lemma \ref{2.2}(ii) we have
$\beta_{i,\aaa}(I_m) =0$ for all $i$ if $a_1 \geq \cdots \geq a_m \geq w+1$.
Hence $W \leq w-1$ by \eqref{3-9-1}.

We finally prove $W \geq w-1$.
Let 
$p=\min\{k:x_1^w \cdots x_k^w x_{k+1}^{w-1} \cdots x_m^{w-1} \in I_m\}.$
Then
\begin{align*}
\Delta^{I_m}_{(w,\dots,w)} 
&= \{ F \subset [m]: x_1^{w-1} \cdots x_m^{w-1} x^{[m] \setminus F} \in I_m\}
= \{ F \subset [m]: |F| \leq m-p\}.
\end{align*}
This simplicial complex clearly has a nontrivial element in homological position $m-p-1$.
Hence
\[
\beta_{m-p,(w,\dots,w)}(I_m) = \dim_\kk \widetilde H_{m-p-1}(\Delta^{I_m}_{(w,\dots,w)}) \ne 0.
\]
By \eqref{3-9-1}, this proves $W \geq w-1$ as desired.
\end{proof}

Since, in Proposition \ref{3.11}, $w=1$ is equivalent to the condition that $\II$ contains a squarefree monomial,
we get the following simple characterization of symmetric monomial ideals whose regularity becomes constant.
Note that the if part was proved in \cite[Proposition 3.9]{LNNR1} in more general setting.

\begin{corollary}
\label{constant}
Let $\II \subset S_\infty$ be a symmetric monomial ideal generated by partitions of length $\leq m$ and let $I_n=\II \cap S_n$ for $ n \geq 1$.
Then $\II$ contains a squarefree monomial if and only if $\reg(I_n)$ is constant for $n \gg 0$.
Moreover, if $\reg(I_n)$ is constant for $n \gg 0$, then
$\reg(I_n)=\reg(I_m)$ for all $n \geq m$.
\end{corollary}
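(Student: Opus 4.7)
My plan is to derive everything from Proposition \ref{3.11} together with the structural formula in Theorem \ref{3.4}, using Lemma \ref{2.2}(ii) as the key tool to exploit the existence of a squarefree monomial.

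First I would settle the equivalence. Proposition \ref{3.11} gives $\reg(I_n)=(w-1)n+C$ for $n\gg 0$ with $w=\min\{\lambda_1:(\lambda_1,\dots,\lambda_k)\in\Lambda(\II)\}$, so constancy of $\reg(I_n)$ is equivalent to $w=1$. It remains to observe that $w=1$ is equivalent to $\II$ containing a squarefree monomial. The direction ``$w=1\Rightarrow$ squarefree monomial'' is immediate, since a partition in $\Lambda(\II)$ with $\lambda_1=1$ must be $(1^r)$ for some $r\le m$, and the corresponding monomial $x_1\cdots x_r$ lies in $\II$. Conversely, if $\II$ contains a squarefree monomial, any minimal monomial generator of $\II$ dividing it is squarefree, so $\Lambda(\II)$ contains some $(1^r)$, forcing $w=1$.

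Next I would prove the ``moreover'' sharpening, the main content of the statement. Assume $w=1$ and fix $r\le m$ with $(1^r)\in\Lambda(\II)$, so that $x_1\cdots x_r\in I_m$. I claim that every $(i,(a_1,\dots,a_m))\in\mathcal F(I_m)$ has $a_m=1$. Indeed, if $a_m\ge 2$, then $a_1,\dots,a_m\ge 2$, so $x_1\cdots x_r$ divides $x^\aaa$ and $\supp(x^\aaa/(x_1\cdots x_r))=[m]=\supp(x^\aaa)$, whence Lemma \ref{2.2}(ii) forces $\beta_{i,\aaa}(I_m)=0$, contradicting $(i,\aaa)\in\mathcal F(I_m)$.

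With this in hand, apply Theorem \ref{3.4}. For $n\ge m$ every element of $\mathcal B(I_n)$ is either of the form $(i,(a_1,\dots,a_t,0^{n-t}))$ with $(i,(a_1,\dots,a_t))\in\mathcal F(I_t)$ for some $t<m$, or of the form $(i+k-m,(a_1,\dots,a_m,a_m^{k-m},0^{n-k}))$ with $(i,(a_1,\dots,a_m))\in\mathcal F(I_m)$ and $m\le k\le n$. In the second case, since $a_m=1$, the corresponding $j$-coordinate is
\[
a_1+\cdots+a_m+(k-m)-(i+k-m)=a_1+\cdots+a_m-i,
\]
which is independent of $k$ (and hence of $n$). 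The first family is also independent of $n$ in its $j$-coordinate. So the set $\{j:\beta_{i,i+j}(I_n)\ne 0\mbox{ for some }i\}$ does not depend on $n$ for $n\ge m$, and taking $n=m$ yields $\reg(I_n)=\reg(I_m)$ for all $n\ge m$.

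The only nontrivial step is the claim that $a_m=1$ whenever $\beta_{i,(a_1,\dots,a_m)}(I_m)\ne 0$; the rest is bookkeeping in Theorem \ref{3.4}. I do not anticipate a real obstacle, since this is a direct application of Lemma \ref{2.2}(ii) to the squarefree generator $x_1\cdots x_r$.
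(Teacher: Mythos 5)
Your proof is correct. For the equivalence, you use Proposition \ref{3.11} exactly as the paper does, and the observation that $w=1$ iff $\II$ contains a squarefree monomial is the same. For the ``moreover'' clause you and the paper both rest on the structural formula (Theorem \ref{1.2}, equivalently Theorem \ref{3.4}), but you take a more explicit route to showing that all line-segment slopes vanish: you prove directly, via Lemma \ref{2.2}(ii) applied to the squarefree generator $x_1\cdots x_r$, that every $(i,(a_1,\dots,a_m))\in\mathcal F(I_m)$ has $a_m=1$, hence $c=a_m-1=0$. The paper instead argues by contradiction at a higher level: if $\reg(I_n)$ were eventually constant but some $(i,j,c)\in D$ had $c>0$, the $j$-coordinate along that segment would grow without bound, so all $c$ must be $0$. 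The two arguments are logically equivalent once the first part is established, but yours pinpoints the concrete mechanism (the squarefree generator forcing $a_m=1$), while the paper's is shorter and purely formal. Both then finish by noting that zero slopes make the set of $j$-coordinates independent of $n$, giving $\reg(I_n)=\reg(I_m)$ for $n\geq m$.
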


\begin{proof}
The first statement is an immediate consequence of Corollary \ref{3.11}.
The second statement follows form Theorem \ref{1.2} since if $\reg(I_n)$ is constant for $n \gg 0$, then any element $(i,j,c)$ in the set $D$ in Theorem \ref{1.2} must satisfies $c=0$.
\end{proof}

The ideal $\mathcal T$ in Example \ref{3.6} contains a squarefree monomial ideal
so the regularity of $T_n$ is constant for $ n\geq m$.

\begin{remark}
It was pointed out by Claudiu Raicu that the number $C$ in Proposition \ref{3.11} can be determined as follows: Let $\II$, $m$ and $w$ be as in the proposition, and let $\alpha_n=(x_1 \cdots x_n)^{w-1}$ for $n\geq 1$. Then
\begin{align}
\label{claudiu}
\reg(I_n)=(w-1)n+\reg(I_m:\alpha_m)\ \ \mbox{for $n \gg 0$}.
\end{align}

To see this, consider the short exact sequence
$$
0 \longrightarrow S_n/(I_n:\alpha_n) \stackrel {\times \alpha_n} \longrightarrow S_n/I_n \longrightarrow S_n/(I_n+(\alpha_n)) \longrightarrow 0.
$$
Then $\Tor_i(S_n/(I_n+(\alpha_n)),\kk)_{\bb}=0$
for any $\bb \geq (w,w,\dots,w)$ (as the ideal contains $\alpha_n=x_1^{w-1}x_2^{w-1}\cdots x_n^{w-1}$ Lemma \ref{2.2}(i) implies this).
Also,
the regularity of $I_n$ must be attained by some multigraded component of $\Tor_i(I_n,\kk)$ of degree $(a_1,\dots,a_{m-1},w,\dots,w)$ for $n \gg 0$.
These two facts tell $\reg(I_n)=(w-1)n+\reg(I_n:\alpha_n)$ for $n \gg 0$. Since $I_n:\alpha_n$ contains a squarefree monomial for $n \geq m$, Corollary \ref{constant} tells $\reg(I_n:\alpha_n)=\reg(I_m:\alpha_m)$ for $n \geq m$,
which guarantees \eqref{claudiu}.
\end{remark}

\section{Proof of Proposition \ref{3.3}}

In this section, we prove our key proposition Proposition \ref{3.3}. For a simplicial complex $\Delta$ and an element $v$ ($v$ need not to be a vertex of $\Delta$),
we define the simplicial complex $\Delta^{[v]}$ by
$$\Delta^{[v]}= \Delta \cup \{F \cup \{v\}: F \in \Delta\}.$$
This simplicial complex is always a cone with apex $v$,
so it is an acyclic simplicial complex.
See Figure \ref{fig3}.
\bigskip

\begin{figure}[h]
%WinTpicVersion4.32a
{\unitlength 0.1in%
\begin{picture}(57.0000,9.1000)(13.6000,-18.4000)%
% LINE 2 0 3 0 Black White  
% 4 1600 1000 1600 1600 1600 1600 1600 1600
% 
\special{pn 8}%
\special{pa 1600 1000}%
\special{pa 1600 1600}%
\special{fp}%
\special{pa 1600 1600}%
\special{pa 1600 1600}%
\special{fp}%
% CIRCLE 2 0 2 0 Black White  
% 4 1600 1600 1600 1650 1600 1650 1600 1650
% 
\special{sh 0}%
\special{ia 1600 1600 50 50 0.0000000 6.2831853}%
\special{pn 8}%
\special{ar 1600 1600 50 50 0.0000000 6.2831853}%
% CIRCLE 2 0 2 0 Black White  
% 4 2200 1600 2200 1650 2200 1650 2200 1650
% 
\special{sh 0}%
\special{ia 2200 1600 50 50 0.0000000 6.2831853}%
\special{pn 8}%
\special{ar 2200 1600 50 50 0.0000000 6.2831853}%
% CIRCLE 2 0 2 0 Black White  
% 4 2200 1000 2200 1050 2200 1050 2200 1050
% 
\special{sh 0}%
\special{ia 2200 1000 50 50 0.0000000 6.2831853}%
\special{pn 8}%
\special{ar 2200 1000 50 50 0.0000000 6.2831853}%
% CIRCLE 2 0 2 0 Black White  
% 4 1600 1000 1600 1050 1600 1050 1600 1050
% 
\special{sh 0}%
\special{ia 1600 1000 50 50 0.0000000 6.2831853}%
\special{pn 8}%
\special{ar 1600 1000 50 50 0.0000000 6.2831853}%
% STR 2 0 3 0 Black White  
% 4 1450 1500 1450 1600 5 0 0 0
% $1$
\put(14.5000,-16.0000){\makebox(0,0){$1$}}%
% STR 2 0 3 0 Black White  
% 4 1450 900 1450 1000 5 0 0 0
% $2$
\put(14.5000,-10.0000){\makebox(0,0){$2$}}%
% STR 2 0 3 0 Black White  
% 4 2350 1500 2350 1600 5 0 0 0
% $3$
\put(23.5000,-16.0000){\makebox(0,0){$3$}}%
% STR 2 0 3 0 Black White  
% 4 2350 900 2350 1000 5 0 0 0
% $4$
\put(23.5000,-10.0000){\makebox(0,0){$4$}}%
% LINE 2 0 3 0 Black White  
% 4 3200 995 3200 1595 3200 1595 3200 1595
% 
\special{pn 8}%
\special{pa 3200 995}%
\special{pa 3200 1595}%
\special{fp}%
\special{pa 3200 1595}%
\special{pa 3200 1595}%
\special{fp}%
% CIRCLE 2 0 2 0 Black White  
% 4 3200 995 3200 1045 3200 1045 3200 1045
% 
\special{sh 0}%
\special{ia 3200 995 50 50 0.0000000 6.2831853}%
\special{pn 8}%
\special{ar 3200 995 50 50 0.0000000 6.2831853}%
% STR 2 0 3 0 Black White  
% 4 3050 1495 3050 1595 5 0 0 0
% $1$
\put(30.5000,-15.9500){\makebox(0,0){$1$}}%
% STR 2 0 3 0 Black White  
% 4 3050 895 3050 995 5 0 0 0
% $2$
\put(30.5000,-9.9500){\makebox(0,0){$2$}}%
% STR 2 0 3 0 Black White  
% 4 3950 1495 3950 1595 5 0 0 0
% $3$
\put(39.5000,-15.9500){\makebox(0,0){$3$}}%
% STR 2 0 3 0 Black White  
% 4 3950 895 3950 995 5 0 0 0
% $4$
\put(39.5000,-9.9500){\makebox(0,0){$4$}}%
% LINE 2 0 3 0 Black White  
% 4 4800 995 4800 1595 4800 1595 4800 1595
% 
\special{pn 8}%
\special{pa 4800 995}%
\special{pa 4800 1595}%
\special{fp}%
\special{pa 4800 1595}%
\special{pa 4800 1595}%
\special{fp}%
% STR 2 0 3 0 Black White  
% 4 4650 1495 4650 1595 5 0 0 0
% $1$
\put(46.5000,-15.9500){\makebox(0,0){$1$}}%
% STR 2 0 3 0 Black White  
% 4 4650 895 4650 995 5 0 0 0
% $2$
\put(46.5000,-9.9500){\makebox(0,0){$2$}}%
% STR 2 0 3 0 Black White  
% 4 5550 1495 5550 1595 5 0 0 0
% $3$
\put(55.5000,-15.9500){\makebox(0,0){$3$}}%
% STR 2 0 3 0 Black White  
% 4 5550 895 5550 995 5 0 0 0
% $4$
\put(55.5000,-9.9500){\makebox(0,0){$4$}}%
% LINE 2 0 3 0 Black White  
% 4 6400 995 6400 1595 6400 1595 6400 1595
% 
\special{pn 8}%
\special{pa 6400 995}%
\special{pa 6400 1595}%
\special{fp}%
\special{pa 6400 1595}%
\special{pa 6400 1595}%
\special{fp}%
% STR 2 0 3 0 Black White  
% 4 6250 895 6250 995 5 0 0 0
% $2$
\put(62.5000,-9.9500){\makebox(0,0){$2$}}%
% STR 2 0 3 0 Black White  
% 4 7150 1495 7150 1595 5 0 0 0
% $3$
\put(71.5000,-15.9500){\makebox(0,0){$3$}}%
% STR 2 0 3 0 Black White  
% 4 7150 895 7150 995 5 0 0 0
% $4$
\put(71.5000,-9.9500){\makebox(0,0){$4$}}%
% LINE 2 0 3 0 Black White  
% 4 3200 1600 3800 1600 3200 1600 3800 1000
% 
\special{pn 8}%
\special{pa 3200 1600}%
\special{pa 3800 1600}%
\special{fp}%
\special{pa 3200 1600}%
\special{pa 3800 1000}%
\special{fp}%
% LINE 2 0 3 0 Black White  
% 8 6400 1600 6700 1300 6700 1300 6400 1000 7000 1000 6700 1300 6700 1300 7000 1600
% 
\special{pn 8}%
\special{pa 6400 1600}%
\special{pa 6700 1300}%
\special{fp}%
\special{pa 6700 1300}%
\special{pa 6400 1000}%
\special{fp}%
\special{pa 7000 1000}%
\special{pa 6700 1300}%
\special{fp}%
\special{pa 6700 1300}%
\special{pa 7000 1600}%
\special{fp}%
% LINE 3 0 3 0 Black White  
% 18 5180 1380 4960 1600 5120 1320 4850 1590 5060 1260 4800 1520 5000 1200 4800 1400 4940 1140 4800 1280 4880 1080 4800 1160 5240 1440 5080 1600 5300 1500 5200 1600 5360 1560 5320 1600
% 
\special{pn 4}%
\special{pa 5180 1380}%
\special{pa 4960 1600}%
\special{fp}%
\special{pa 5120 1320}%
\special{pa 4850 1590}%
\special{fp}%
\special{pa 5060 1260}%
\special{pa 4800 1520}%
\special{fp}%
\special{pa 5000 1200}%
\special{pa 4800 1400}%
\special{fp}%
\special{pa 4940 1140}%
\special{pa 4800 1280}%
\special{fp}%
\special{pa 4880 1080}%
\special{pa 4800 1160}%
\special{fp}%
\special{pa 5240 1440}%
\special{pa 5080 1600}%
\special{fp}%
\special{pa 5300 1500}%
\special{pa 5200 1600}%
\special{fp}%
\special{pa 5360 1560}%
\special{pa 5320 1600}%
\special{fp}%
% LINE 3 0 3 0 Black White  
% 8 6660 1260 6400 1520 6600 1200 6400 1400 6540 1140 6400 1280 6480 1080 6400 1160
% 
\special{pn 4}%
\special{pa 6660 1260}%
\special{pa 6400 1520}%
\special{fp}%
\special{pa 6600 1200}%
\special{pa 6400 1400}%
\special{fp}%
\special{pa 6540 1140}%
\special{pa 6400 1280}%
\special{fp}%
\special{pa 6480 1080}%
\special{pa 6400 1160}%
\special{fp}%
% CIRCLE 2 0 2 0 Black White  
% 4 3200 1595 3200 1645 3200 1645 3200 1645
% 
\special{sh 0}%
\special{ia 3200 1595 50 50 0.0000000 6.2831853}%
\special{pn 8}%
\special{ar 3200 1595 50 50 0.0000000 6.2831853}%
% CIRCLE 2 0 2 0 Black White  
% 4 3800 1595 3800 1645 3800 1645 3800 1645
% 
\special{sh 0}%
\special{ia 3800 1595 50 50 0.0000000 6.2831853}%
\special{pn 8}%
\special{ar 3800 1595 50 50 0.0000000 6.2831853}%
% CIRCLE 2 0 2 0 Black White  
% 4 3800 995 3800 1045 3800 1045 3800 1045
% 
\special{sh 0}%
\special{ia 3800 995 50 50 0.0000000 6.2831853}%
\special{pn 8}%
\special{ar 3800 995 50 50 0.0000000 6.2831853}%
% LINE 2 0 3 0 Black White  
% 6 4800 1595 5400 1595 5400 1595 5400 995 5400 1595 4800 995
% 
\special{pn 8}%
\special{pa 4800 1595}%
\special{pa 5400 1595}%
\special{fp}%
\special{pa 5400 1595}%
\special{pa 5400 995}%
\special{fp}%
\special{pa 5400 1595}%
\special{pa 4800 995}%
\special{fp}%
% CIRCLE 2 0 2 0 Black White  
% 4 4800 990 4800 1040 4800 1040 4800 1040
% 
\special{sh 0}%
\special{ia 4800 990 50 50 0.0000000 6.2831853}%
\special{pn 8}%
\special{ar 4800 990 50 50 0.0000000 6.2831853}%
% CIRCLE 2 0 2 0 Black White  
% 4 4800 1590 4800 1640 4800 1640 4800 1640
% 
\special{sh 0}%
\special{ia 4800 1590 50 50 0.0000000 6.2831853}%
\special{pn 8}%
\special{ar 4800 1590 50 50 0.0000000 6.2831853}%
% CIRCLE 2 0 2 0 Black White  
% 4 5400 1590 5400 1640 5400 1640 5400 1640
% 
\special{sh 0}%
\special{ia 5400 1590 50 50 0.0000000 6.2831853}%
\special{pn 8}%
\special{ar 5400 1590 50 50 0.0000000 6.2831853}%
% CIRCLE 2 0 2 0 Black White  
% 4 5400 990 5400 1040 5400 1040 5400 1040
% 
\special{sh 0}%
\special{ia 5400 990 50 50 0.0000000 6.2831853}%
\special{pn 8}%
\special{ar 5400 990 50 50 0.0000000 6.2831853}%
% CIRCLE 2 0 2 0 Black White  
% 4 6400 990 6400 1040 6400 1040 6400 1040
% 
\special{sh 0}%
\special{ia 6400 990 50 50 0.0000000 6.2831853}%
\special{pn 8}%
\special{ar 6400 990 50 50 0.0000000 6.2831853}%
% CIRCLE 2 0 2 0 Black White  
% 4 6400 1590 6400 1640 6400 1640 6400 1640
% 
\special{sh 0}%
\special{ia 6400 1590 50 50 0.0000000 6.2831853}%
\special{pn 8}%
\special{ar 6400 1590 50 50 0.0000000 6.2831853}%
% CIRCLE 2 0 2 0 Black White  
% 4 7000 1590 7000 1640 7000 1640 7000 1640
% 
\special{sh 0}%
\special{ia 7000 1590 50 50 0.0000000 6.2831853}%
\special{pn 8}%
\special{ar 7000 1590 50 50 0.0000000 6.2831853}%
% CIRCLE 2 0 2 0 Black White  
% 4 7000 990 7000 1040 7000 1040 7000 1040
% 
\special{sh 0}%
\special{ia 7000 990 50 50 0.0000000 6.2831853}%
\special{pn 8}%
\special{ar 7000 990 50 50 0.0000000 6.2831853}%
% CIRCLE 2 0 2 0 Black White  
% 4 6700 1300 6700 1350 6700 1350 6700 1350
% 
\special{sh 0}%
\special{ia 6700 1300 50 50 0.0000000 6.2831853}%
\special{pn 8}%
\special{ar 6700 1300 50 50 0.0000000 6.2831853}%
% STR 2 0 3 0 Black White  
% 4 6250 1495 6250 1595 5 0 0 0
% $1$
\put(62.5000,-15.9500){\makebox(0,0){$1$}}%
% STR 2 0 3 0 Black White  
% 4 6700 1055 6700 1155 5 0 0 0
% $5$
\put(67.0000,-11.5500){\makebox(0,0){$5$}}%
% STR 2 0 3 0 Black White  
% 4 1920 1800 1920 1900 5 0 0 0
% $\Delta$
\put(19.2000,-19.0000){\makebox(0,0){$\Delta$}}%
% STR 2 0 3 0 Black White  
% 4 3520 1805 3520 1905 5 0 0 0
% $\Delta^{[1]}$
\put(35.2000,-19.0500){\makebox(0,0){$\Delta^{[1]}$}}%
% STR 2 0 3 0 Black White  
% 4 5120 1805 5120 1905 5 0 0 0
% $\Delta^{[3]}$
\put(51.2000,-19.0500){\makebox(0,0){$\Delta^{[3]}$}}%
% STR 2 0 3 0 Black White  
% 4 6720 1805 6720 1905 5 0 0 0
% $\Delta^{[5]}$
\put(67.2000,-19.0500){\makebox(0,0){$\Delta^{[5]}$}}%
\end{picture}}%
\bigskip
\caption{Examples of $\Delta^{[v]}$}
\label{fig3}
\end{figure}
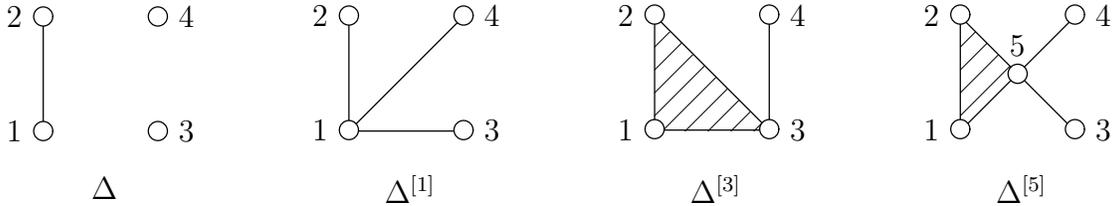

For a simplicial complex $\Delta$ on $[n]$
and a subset $X \subset [n]$,
we say that $\Delta$ is fixed by permutations on $X$
if for any permutation $\sigma \in \sym_X$ the complex
$\sigma(\Delta)=\{\sigma(F):F \in \Delta\}$ is equal to $\Delta$,
where we consider that $\sigma(k)=k$ if $k \not \in X$.
For example, the simplicial complex $\Delta$ in Figure 3 is fixed by permutations on $\{1,2\}$ (and also $\{3,4\}$).

The next lemma is quite technical and its proof is not so simple,
but it is crucial to prove Proposition \ref{3.3}.

\begin{lemma}
\label{3.2}
Let $\Delta$ be a simplicial complex on $[n]$,
$1 \leq r \leq n$ an integer,
and
$$\Sigma=\Delta^{[1]} \cup \Delta^{[2]}\cup \cdots \cup \Delta^{[r]}.$$
Suppose that $\Delta$ is fixed by permutations on $[r]$.
If $\widetilde H_i(\Sigma) \ne 0$ then $\widetilde H_{i-1}(\Delta) \ne 0$.
\end{lemma}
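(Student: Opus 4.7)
The plan is to proceed by induction on $r$. The base case $r=1$ is immediate, since $\Sigma=\Delta^{[1]}$ is a cone with apex $1$ and hence acyclic, so the hypothesis $\widetilde H_i(\Sigma)\neq 0$ never holds.

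For the inductive step I would apply Mayer--Vietoris to the decomposition $\Sigma=\Sigma'\cup\Delta^{[r]}$, where $\Sigma'=\Delta^{[1]}\cup\cdots\cup\Delta^{[r-1]}$. Because $\Delta^{[r]}$ is a cone and hence acyclic, the resulting long exact sequence collapses to
\[
\widetilde H_i(\Sigma')\longrightarrow\widetilde H_i(\Sigma)\longrightarrow\widetilde H_{i-1}(X)\longrightarrow\widetilde H_{i-1}(\Sigma'),
\]
with $X=\Sigma'\cap\Delta^{[r]}$. Assuming $\widetilde H_i(\Sigma)\neq 0$, either some class lifts to $\widetilde H_i(\Sigma')\neq 0$, in which case the inductive hypothesis (valid since $\Delta$ is also $\sym_{[r-1]}$-fixed after relabelling) gives $\widetilde H_{i-1}(\Delta)\neq 0$; or the connecting map is nonzero, so $\widetilde H_{i-1}(X)\neq 0$, and one must argue further.

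To handle this second alternative, the first task is an explicit description of $X$. A short case analysis on whether $r\in G$, combined with the $\sym_{[r]}$-symmetry (the swap $(v,r)$ for any $v\in G\cap[r-1]$ shows that $G\setminus\{v\}\in\Delta$ iff $G\setminus\{r\}\in\Delta$), yields
\[
X=\Delta\cup\bigl\{H\cup\{r\}:H\in\Delta,\ r\notin H,\ H\cap[r-1]\neq\emptyset\bigr\},
\]
which is automatically closed under subsets by the same symmetry. Since $\Delta\subset X\subset\Delta^{[r]}$ with $\Delta^{[r]}$ acyclic, Lemma~\ref{3.1} forces the inclusion $\Delta\hookrightarrow X$ to be zero on reduced homology; the analogous statement for $\Delta\hookrightarrow\Sigma'$, $\Delta\hookrightarrow\Delta^{[r]}$ and $\Delta\hookrightarrow\Sigma$ holds by the same argument. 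The long exact sequences of these pairs therefore split into short exact sequences $0\to\widetilde H_j(-)\to H_j(-,\Delta)\to\widetilde H_{j-1}(\Delta)\to 0$, and since $\Delta^{[r]}/\Delta$ is the suspension of $\Delta$ one also has $H_j(\Delta^{[r]},\Delta)\cong\widetilde H_{j-1}(\Delta)$. A diagram chase through the Mayer--Vietoris sequence of the pairs $(X,\Delta),(\Sigma',\Delta),(\Delta^{[r]},\Delta),(\Sigma,\Delta)$ should then trace the nonzero class of $\widetilde H_{i-1}(X)$ back to a nonzero class in $\widetilde H_{i-1}(\Delta)$.

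The main obstacle I expect lies in this final diagram chase: one must verify that the class coming from $\widetilde H_{i-1}(X)$ projects nontrivially onto the $\widetilde H_{i-1}(\Delta)$-summand of the relevant short exact sequence, rather than being absorbed into $\widetilde H_{i-1}(\Sigma')$ or vanishing in $H_*(\Sigma,\Delta)$. The extra faces $\{H\cup\{r\}\}$ of $X$ must be shown to match up correctly with the cone structure of $\Delta^{[r]}$ and with the already-understood geometry of $\Sigma'$, and pinning down the correct identifications and signs is where most of the combinatorial work will reside. A viable alternative would be to bypass the diagram chase entirely by working at the simplicial-chain level: given a nonzero cycle $z\in C_i(\Sigma)$, use the $\sym_{[r]}$-symmetry to explicitly construct a nonzero cycle in $C_{i-1}(\Delta)$ via a symmetrization or Morse-matching argument tailored to the added faces.
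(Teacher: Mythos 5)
Your setup is the same as the paper's: induction on $r$, Mayer--Vietoris for $\Sigma=\Sigma'\cup\Delta^{[r]}$ with $\Delta^{[r]}$ acyclic, reducing to showing that $\widetilde H_{i-1}(X)\neq 0$ (your $X$ is the paper's $\Gamma=\Sigma'\cap\Delta^{[r]}$) forces $\widetilde H_{i-1}(\Delta)\neq 0$. Your explicit description of $X$ is also correct. But the argument you then sketch has a genuine error: you assert that ``since $\Delta\subset X\subset\Delta^{[r]}$ with $\Delta^{[r]}$ acyclic, Lemma~\ref{3.1} forces the inclusion $\Delta\hookrightarrow X$ to be zero on reduced homology.'' That is a misapplication of Lemma~\ref{3.1}, which requires the \emph{middle} complex to be acyclic; having an acyclic complex \emph{containing} $X$ tells you nothing about $\widetilde H_*(\Delta)\to\widetilde H_*(X)$. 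In fact the claim is false: take $\Delta=\{\emptyset,\{1\},\{2\},\{3\}\}$ on $[3]$ with $r=2$; then $X=\{\emptyset,\{1\},\{2\},\{3\},\{1,2\}\}$, and $\widetilde H_0(\Delta)\cong\kk^2\to\widetilde H_0(X)\cong\kk$ is surjective, hence nonzero. Consequently the short exact sequence $0\to\widetilde H_j(X)\to H_j(X,\Delta)\to\widetilde H_{j-1}(\Delta)\to 0$ you rely on does not exist, and the diagram chase you leave open cannot begin from this point. (You are candid that the diagram chase is the ``main obstacle''; this gap confirms that it is not just a matter of bookkeeping.)

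The paper avoids this difficulty entirely. Rather than attempting to trace a class from $\widetilde H_{i-1}(\Gamma)$ back into $\widetilde H_{i-1}(\Delta)$ through inclusions, it proves the stronger quantitative claim $\dim_\kk\widetilde H_i(\Gamma)\le\dim_\kk\widetilde H_i(\Delta)$ for all $i$, by rewriting $\Gamma=(\Delta-[r])\cup X^{[r]}$ and $\Delta=(\Delta-[r])\cup X$ where now $X=\bigcup_{k=1}^r\st(k)$, observing that $(\Delta-[r])\cap X=(\Delta-[r])\cap X^{[r]}=:Y$ with $Y\subset\st(1)$, and comparing the two resulting Mayer--Vietoris sequences. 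Because $\st(1)$ and $X^{[r]}$ are cones, Lemma~\ref{3.1} (applied correctly this time, with the acyclic complex in the \emph{middle}) kills the relevant maps out of $\widetilde H_*(Y)$, forcing the two connecting maps to have isomorphic images, and the dimension count drops out. This dimension-comparison is the key idea that your proposal is missing; without it, or some substitute for it, the second branch of the Mayer--Vietoris dichotomy is not handled.
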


\begin{proof}
We prove the statement using induction on $r$.
The statement is trivial when $r=1$ since $\Delta^{[1]}$ is an acyclic simplicial complex.
Suppose $r>1$ and we assume that the statement holds for
$$\Sigma'=\Delta^{[1]} \cup \Delta^{[2]}\cup \cdots \cup \Delta^{[r-1]}.$$
Let $\Gamma=\Sigma' \cap \Delta^{[r]}$.
Note that $\Gamma$ contains $\Delta$ since each $\Delta^{[k]}$ contains $\Delta$.
We first prove the next claim.
\bigskip

\noindent
{\bf Claim.} $\dim_\kk \widetilde H_i(\Gamma) \leq \dim_\kk H_i(\Delta)$ for all $i$.

\begin{proof}[Proof of Claim]
Let
$$\Delta-[r]=\{ F \in \Delta: F \subset \{r+1,\dots,n\} \}$$
and 
$$\st(i)=\{F \in \Delta: \{i\} \cup F \in \Delta\}$$
for $i=1,2,\dots,r$.
The definition tells that each $\st(i)$ is a cone with apex $i$ and we have
$$\Delta=(\Delta-[r]) \cup \left(\bigcup_{k=1}^r \st(k)\right).$$
We will prove
\begin{align}
\label{3-1}
\Gamma= (\Delta-[r]) \cup \left( \bigcup_{k=1}^r \st(k) \right)^{[r]}.
\end{align}

We first prove the inclusion ``$\supset$'' in \eqref{3-1}.
Recall that $\Gamma = \Sigma'\cup \Delta^{[r]}$.
Since $\Delta \supset \bigcup_{k=1}^r \st(k)$,
the inclusion $\Delta^{[r]} \supset  (\Delta-[r]) \cup \left( \bigcup_{k=1}^r \st(k) \right)^{[r]}$ is clear.
It is enough to prove that any element $F \in \left(\bigcup_{k=1}^r \st(k)\right)^{[r]}$ with $F \not \in \Delta$ is contained in $\Sigma'$.
Take such an element $F$.
Then $F$ must contain $r$ since $F \not \in \Delta$ and we have $F \setminus \{r\} \in \bigcup_{k=1}^r \st(k)$.
Suppose $F \setminus \{r\} \in \st(\ell)$ with $1 \leq \ell \leq r$.
If $\ell \not \in F\setminus \{r\}$,
then $(F \setminus \{r\}) \cup \{\ell\} \in \st_\Delta(\ell) \subset \Delta$,
and since $\Delta$ is fixed by permutations on $[r]$ by exchanging $r$ and $\ell$ in $(F \setminus \{r\}) \cup \{\ell\}$
(if $r \ne \ell$) we have $F=( F \setminus \{r\}) \cup \{r\} \in \Delta \subset \Sigma'$.
On the other hand,
if $\ell \in F \setminus \{r\}$,
then $\ell <r$ and by exchanging $\ell$ and $r$ in $F \setminus \{r\} \in \Delta$,
we have $F \setminus \{\ell \} \in \Delta$ and therefore $F \in \Delta^{[\ell]} \subset \Sigma'$ as desired.

Next, we prove the inclusion `$\subset$' in \eqref{3-1}.
It is enough to prove that any element $F \in \Gamma$ with $F \not \in \Delta$ is contained in $\left(\bigcup_{k=1}^r \st(k)\right)^{[r]}$.
Take such an $F \in \Gamma$ and suppose $F \in \Delta ^{[\ell]}$ with $\ell <r$.
Since $F \in \Delta^{[\ell]} \cap \Delta^{[r]}$ and $F \not \in \Delta$,
$F$ must contain both $\ell$ and $r$. Also, we have $F \setminus \{r\} \in \Delta$,
and since $\ell \in F \setminus \{r\} \in \Delta$
we have $F \setminus \{r\} \in \st(\ell)$, which tells $F \in \st(\ell)^{[r]}$ as desired.

We now complete the proof of the claim.
Let $X=\bigcup_{k=1}^r \st(k)$
and $Y=(\Delta-[r])\cap X$.
Any element $F \in Y$ satisfies $F \subset \{r+1,\dots,n\}$ and $F \cup \{k\} \in \Delta$ for some $1 \leq k\leq r$.
But $F \cup \{k\} \in \Delta$ implies $F \cup \{1\} \in \Delta$ since $\Delta$ is fixed by permutations on $[r]$. Thus any element $F \in Y$ must satisfy $F \cup \{1\} \in \Delta$, so we have
$$Y \subset \st(1).$$
Also, since any element in $\Delta-[r]$ does not contain $r$, we have
$$(\Delta-[r]) \cap X^{[r]} =Y.$$
Compare the following two Mayer--Vietris exact sequences for $\Delta=(\Delta-[r]) \cup X$ and for $\Gamma=(\Delta-[r]) \cup X^{[r]}$,
\begin{align*}
\cdots \longrightarrow
\widetilde H_i(Y) \stackrel {\delta_i=(\eta_i,\psi_i)} {\longrightarrow}
\widetilde  H_i(\Delta-[r]) \bigoplus \widetilde  H_i(X) \longrightarrow
\widetilde  H_i(\Delta) \stackrel {\rho_i} \longrightarrow
\widetilde  H_{i-1}(Y) \stackrel {\delta_{i-1}} \longrightarrow \cdots,
\end{align*}
\begin{align*}
\cdots \longrightarrow
\widetilde H_i(Y) \stackrel {\delta'_i=(\eta_i,\varphi_i)} {\longrightarrow}
\widetilde  H_i(\Delta-[r]) \bigoplus \widetilde H_i(X^{[r]}) \longrightarrow
\widetilde  H_i(\Gamma) \stackrel {\rho_i} \longrightarrow
\widetilde  H_{i-1}(Y) \stackrel {\delta'_{i-1}} \longrightarrow \cdots.
\end{align*}
The maps $\eta_i,\psi_i,\varphi_i$ are induced by inclusions.
Since $Y \subset \st(1) \subset X$
and since $\st(1)$ and $X^{[r]}$ are acyclic,
the maps $\psi_i$ and $\varphi_i$ are zero by Lemma \ref{3.1}.
Hence
$$\Image \delta_i \cong \Image \delta'_i$$
for all $i$.
Then the long exact sequences above tell
\begin{align*}
\dim_\kk \widetilde H_i(\Delta) 
& = \dim_\kk \widetilde H_{i-1}(Y)+\dim_\kk \widetilde H_i(\Delta-[r])+ \dim_\kk \widetilde H_i(X)\\
&\hspace{16pt}-\dim_\kk (\Image \delta_i) -\dim_\kk (\Image \delta_{i-1})\\
&\geq 
\dim_\kk \widetilde H_{i-1}(Y)+\dim_\kk \widetilde H_i(\Delta-[r])\\
&\hspace{16pt}-\dim_\kk (\Image \delta'_i) -\dim_\kk (\Image \delta'_{i-1})\\
&= \dim_\kk \widetilde H_i(\Gamma)
\end{align*}
for all $i$ as desired.
\end{proof}

We now complete the proof of the Lemma \ref{3.2}.
Consider the Mayer--Vietris long exact sequence for $\Sigma=\Sigma' \cup \Delta^{[r]}$
\begin{align*}
%\label{3-2}
\cdots \longrightarrow
\widetilde H_i(\Sigma') \bigoplus \widetilde H_i(\Delta^{[r]}) \longrightarrow
\widetilde H_i(\Sigma)  \longrightarrow
\widetilde H_{i-1}(\Gamma) \longrightarrow \cdots.
\end{align*}
Since $\Delta^{[r]}$ is acyclic,
if $\widetilde H_i(\Sigma) \ne 0$,
then either $\widetilde H_{i-1}(\Gamma)\ne 0$ or $\widetilde H_i(\Sigma') \ne 0$.
However, the induction hypothesis tells that $\widetilde H_i(\Sigma') \ne 0$ implies $\widetilde H_{i-1}(\Delta) \ne 0$ and the claim tells that $\widetilde H_{i-1}(\Gamma) \ne 0$ implies $\widetilde H_{i-1}(\Delta) \ne 0$.
Hence $\widetilde H_i(\Sigma) \ne 0$ implies $\widetilde H_{i-1}(\Delta) \ne 0$.
\end{proof}

We are now ready to prove Proposition \ref{3.3}.
Just in case, we recall the statement.

\begin{proposition3}
Let $\II \subset S_\infty$ be a symmetric monomial ideal generated by partitions of length $\leq m$, and let $I_n =\II \cap S_n$ for $n \geq 1$.
For any integer $n \geq m$ and vector $\aaa=(a_1,\dots,a_t,b,\dots,b) \in \ZZ^n_{\geq 0}$ with $a_1 \geq \cdots \geq a_t >b \geq 1$,
we have
$$\beta_{i,\aaa}(I_n) \ne 0 \ \Leftrightarrow \beta_{i+1,(\aaa,b)}(I_{n+1}) \ne 0.$$
\end{proposition3}

\begin{proof}[Proof of Proposition \ref{3.3}]
We note $t <m$ by Proposition \ref{2.4}.
Let
$$\Delta=\Delta_\aaa^{I_n}=\{ F \subset [n] : x^{\ua} \cdot x^{[n] \setminus F} \in I_n\}$$
and
$$\Sigma=\Delta_{(\aaa,b)}^{I_{n+1}}=\{ F \subset [n+1]: x^\ua \cdot  x_{n+1}^{b-1} \cdot  x^{[n+1]\setminus F} \in I_{n+1}\}.$$
Clearly, $\Delta$ and $\Sigma$ are fixed by permutations on $\{t+1,\dots,n\}$ and on $\{t+1,\dots,n+1\}$, respectively.
We claim
\begin{align}
\label{3-3}
\Sigma=\Delta^{[t+1]} \cup \Delta^{[t+2]} \cup \cdots \cup \Delta^{[n]} \cup \Delta^{[n+1]}.
\end{align}
To prove this equality, we first show that the righthand side is fixed by permutations on $\{t+1,\dots,n+1\}$.
Let $\tau_{i,j} \in \sym_{n+1}$ be the transposition of $i$ and $j$. Since $\Delta$ is fixed by permutations on $\{t+1,\dots,n\}$, so does $\bigcup_{k=t+1}^{n+1} \Delta^{[k]}$. Thus it is enough to show that, for any $F \in \bigcup_{k=t+1}^{n+1} \Delta^{[k]}$ and $k \in \{t+1,\dots,n\}$, we have $\tau_{k,n+1} (F) \in \bigcup_{k=t+1}^{n+1} \Delta^{[k]}$.
To see this, we may assume either (i) $n+1 \in F$ and $k \not \in F$, or (ii) $n+1 \not \in F$ and $k \in F$. In the former case, $F$ must be in $\Delta^{[n+1]}$ since $\Delta$ is a simplicial complex on $[n]$, and we have $F \setminus \{n+1\} \in \Delta$, which implies $\tau_{k,n+1}(F)= (F \setminus \{n+1\}) \cup \{k\} \in \Delta^{[k]}$.
In the latter case, $F \in \Delta^{[\ell]}$ for some $ t+1 \leq \ell \leq n$.
If $\ell \in F$ then $F=\tau_{k,\ell}(F) \in \Delta^{[k]}$ and if $\ell \not \in F$ then $F \in \Delta$.
In both cases, we have $F \setminus \{k\} \in \Delta$ which implies $\tau_{k,n+1}(F)=(F \setminus \{k\}) \cup \{n+1\} \in \Delta^{[n+1]}$.

Now we prove the inclusion `$\supset$'.
Let $F \in \Delta$. Since $x^\ua \cdot x^{[n]\setminus F} \in I_n$, it is clear that
$$x^\ua \cdot x_{n+1}^{b-1} \cdot x^{[n+1] \setminus (F \cup \{n+1\})} = x^\ua \cdot x_{n+1}^{b-1} \cdot x^{[n]\setminus F} \in I_{n+1},$$
which implies $F \cup \{n+1\} \in \Sigma$.
Hence $\Delta^{[n+1]} \subset \Sigma$. This also tells $\Delta^{[k]} \subset \Sigma$ for all $k=t+1,\dots,n$ since $\Sigma$ is fixed by permutations on $\{t+1,\dots,n+1\}$.

Next we prove the inclusion `$\subset$' in \eqref{3-3}.
Let $F \in \Sigma$. Then
$$m=x^\ua \cdot x_{n+1}^{b-1} \cdot  x^{[n+1]\setminus F} \in I_{n+1}.$$
Since both simplicial complexes in \eqref{3-3} are fixed by permutations on $\{t+1,\dots,n+1\}$,
we may assume either (i) $F$ contains none of $t+1,\dots,n+1$, or (ii) $n+1 \in F$.
The desired inclusion follows from the following case analysis.

{\bf Case (i).}
If $F$ contains none of $t+1,\dots,n+1$,
then
$$m= x_1^{a_1-1} \cdots x_t^{a_t-1} x_{t+1}^b \cdots x_{n+1}^b \cdot x^{[t]\setminus F} \in I_{n+1}.$$
Then Lemma \ref{tech} tells $x^\ua \cdot x^{[n]\setminus F} = (m/x_{n+1}^b) \in I_{n+1}\cap S_n=I_n$
and we have $F \in \Delta$.

{\bf Case (ii).}
Suppose $n+1 \in F$. Let $F'=F \setminus \{n+1\}$.
We have
$$m= x_1^{a_1-1} \cdots x_t^{a_t-1} x_{t+1}^{b-1} \cdots x_{n}^{b-1} \cdot x^{[n]\setminus F'} \cdot x_{n+1}^{b-1} \in I_{n+1}.$$
Again, Lemma \ref{tech} tells $x^\ua \cdot x^{[n]\setminus F'} \in I_{n+1}$ and we have $F' \in \Delta$.
Hence we get $F=F' \cup \{n+1\} \in \Delta^{[n+1]}$.

Now we prove the proposition. Let
$$\Sigma'=\Delta^{[t+1]} \cup \cdots \cup \Delta^{[n]}.$$
Then $\Sigma=\Sigma' \cup \Delta^{[n+1]}$ and $\Sigma' \cap \Delta^{[n+1]}=\Delta$ since $n+1$ is not a vertex of $\Sigma'$.
Consider the Mayer--Vietris long exact sequence for $\Sigma=\Sigma' \cup \Delta^{[n+1]}$
\begin{align*}
\cdots
\longrightarrow
\widetilde H_i(\Delta)
\stackrel {\delta_i=(\psi_i,\varphi_i)} \longrightarrow
\widetilde H_i(\Sigma') \bigoplus \widetilde H_i(\Delta^{[n+1]})
\longrightarrow
\widetilde H_i(\Sigma)
\longrightarrow
\widetilde H_{i-1}(\Delta)
\stackrel {\delta_{i-1}} \longrightarrow
\cdots.
\end{align*}
Note that $\psi_i:\widetilde H_i(\Delta) \to \widetilde H_i(\Sigma')$ and $\varphi_i: \widetilde H_i(\Delta) \to \widetilde H_i(\Delta^{[n+1]})$
are the maps induced by inclusions.
Since $\Delta \subset \Delta^{[1]} \subset \Sigma'$, Lemma \ref{3.1} tells that $\delta_i$ is a zero map for all $i$.
Then we have $\widetilde H_i(\Sigma) \ne 0$ if and only if $\widetilde H_i(\Sigma') \ne 0$ or $\widetilde H_{i-1}(\Delta)\ne 0$.
However, Lemma \ref{3.2} tells that $\widetilde H_i(\Sigma')\ne 0$ implies $\widetilde H_{i-1}(\Delta)\ne 0$.
Hence $\widetilde H_i(\Sigma) \ne 0$ if and only if $\widetilde H_{i-1}(\Delta) \ne 0$, in particular,
$$\beta_{i,\aaa}(I_n) \ne 0
\Leftrightarrow
\widetilde H_{i-1}(\Delta) \ne 0
\Leftrightarrow
\widetilde H_i(\Sigma) \ne 0
\Leftrightarrow
\beta_{i+1,(\aaa,b)}(I_{n+1}) \ne 0,$$
as desired.
\end{proof}

\section{Questions and Problems}

In this section we pose some questions.

\subsection*{Algebraic proof}
Our proof of Theorem \ref{1.2} is based on careful analysis of homologies of simplicial complexes,
and we do not see any {\em algebraic} reason that explains why we get such a simple stability for the shape of Betti tables.
Thus we ask the following question.

\begin{question}
\label{4.1}
Is there a purely algebraic proof of Theorem \ref{1.2}?
\end{question}

\subsection*{Betti numbers}
Let $\II \subset S_\infty$ be a symmetric monomial ideal generated by partitions of length $\leq m$ and $I_n=\II \cap S_n$ for $n \geq 1$.
We determine the shape of the (multigraded) Betti table of $I_n$, however we could tell nothing about the values of Betti numbers.

\begin{question}
With the same notation as above,
is there a way to determine the numbers $\beta_{i,j}(I_n)$ for all $i,j$ and $n \geq m$?
\end{question}

To study such a problem,
it might be important to understand the $\kk[\sym_n]$-module structure of $\Tor_i(I_n,\kk)_j$ rather than just determining its $\kk$-dimension.
Such a structure is studied for some particular monomial ideals in \cite{Ga,BdA}.
Also, a more challenging problem would be

\begin{problem}
Find a way to construct a minimal ($\sym_{m+1}$-equivariant) free resolution of $I_{m+1}$ from a given minimal ($\sym_m$-equivariant) free resolution of $I_m$.
\end{problem}

\subsection*{Combinatorics of partitions}
The set $\Lambda$ of all partitions has a natural poset structure if we define its order by $\lambda \leq \mu$ if $x^\lambda$ divides $x^\mu$.
We say that a subset $P \subset \Lambda$ is an ideal (or sometimes called a filter) of $\Lambda$ 
if for any $\lambda \in P$ and $\mu > \lambda$ one has $\mu \in P$.
Then, for any symmetric monomial ideal $\II \subset S_\infty$,
the set $P(\II)=\{\lambda \in \Lambda: x^\lambda \in \II\}$ is an ideal of $\Lambda$.
It is not hard to see that the assignment $\II \to P(\II)$ gives a one to one correspondent between symmetric monomial ideals in $S_\infty$ and ideals in $\Lambda$ (see \cite{BdA}).
This natural correspondence suggests the following problem.

\begin{problem}
Study relations between algebraic properties of symmetric monomial ideals in $S_\infty$ (or $S_n$) and combinatorial properties of ideals in $\Lambda$.
\end{problem}

As we remarked in the introduction,
Raicu \cite{Ra} find a combinatorial formula of the projective dimension and the regularity of a symmetric monomial ideal $I$ in $S_n$.
It would be interesting to find a combinatorial way to determine $\beta_{i,j}(I)$ (or more strongly $\kk[\sym_n]$-module structure of $\Tor_i(I,\kk)_j$) from combinatorial information of $P(I)$.

We note that the graded Betti numbers may depend on the characteristic of $\kk$.
For example, if $I \subset S_6$ is generated by the following $10$ partitions
\begin{align*}
&(5,4,4,2,2,1),(5,4,4,3,1,1),(5,5,3,3,1,1),(5,5,3,3,2),(5,5,4,2,2),\\
&(6,4,3,2,2,1),(6,4,3,3,2),(6,4,4,3,1),(6,5,3,2,1,1),(6,5,4,2,1),
\end{align*}
then $\Delta^I_{(6,5,4,3,2,1)}$ is a triangulation of $\RR P^2$
and $\beta_{2,(6,5,4,3,2,1)}(I)$ is $0$ if $\mathrm{char}(\kk) \ne 2$ and $1$ if $\mathrm{char}(\kk)=2$.
On the other hand, the result of Raicu tells that projective dimension and regularity do not depend on the characteristic of the ground field.

\subsection*{Generalization}
The study of symmetric monomial ideals in $S_\infty$ is a very special case of a more general setting studied by Le, Nagel, Nguyen and R\"omer
\cite{LNNR1,LNNR2}, who consider homogeneous ideals in $R=\kk[x_{i,j}:1\leq i \leq c, j \geq 1]$ fixed by the action of the monoid $\mathrm{inc}(\NN)=\{\sigma :\NN \to \NN \mid \sigma(i+1) > \sigma(i) \mbox{ for all }i\}$.
It would be an attractive problem to find generalizations of the results in this paper to such a more general setting.

\begin{question}
Can we find a (possibly weaker) version of Theorem \ref{1.2} which holds for all homogeneous ideals, ideals in $R$, or ideals fixed by $\mathrm{inc}(\NN)$-action?
\end{question}

The case when $\II$ is not a monomial ideal or when $\II$ is not fixed by $\sym_\infty$-action seems to be more complicated. Below we write a few example showing that the situation is not as simple as the case of monomial ideals in $S_\infty$.

\begin{example}
\label{fex1}
It was pointed out by Hop Nguyen that the statement of Corollary \ref{1.4} (and therefore Theorem \ref{1.2} also) does not hold for $\mathrm{inc}(\NN)$-invariant monomial ideals. Indeed, if 
$$\II = (\sigma(x_1^2x_2):\sigma \in \mathrm{inc}(\NN)) + (\sigma(x_1^3): \sigma \in \mathrm{inc}(\NN))$$
and $I_n = \II \cap S_n$, then $x_1x_2 \cdots x_{k -1} x_k^2$ is a socle element of $S_n/I_n$ for $k=1,2,\dots,n$.
This tells that 
$|\{j: \beta_{n-1,j}(I_n) \ne 0\}| \geq  n$,
so even Corollary \ref{1.4} does not hold for $\mathrm{inc}(\NN)$-invariant monomial ideals.
\end{example}

\begin{example}
There are symmetric homogeneous ideals whose Betti table has similar shape as ideals in Example \ref{fex1}.
Let $n \geq k$ be positive integers
and let 
$$
\textstyle
I_{n,k}^{\mathrm{Vd}}= \left( \prod_{1 \leq p <q \leq k} (x_{i_p}-x_{i_q}): 1 \leq i_1 < \cdots < i_k \leq n \right) \subset S_n.$$
The ideal $I_{n,k}^{\mathrm{Vd}}$ is fixed by an action of $\sym_n$,
so we get a chain of symmetric ideals
 $$\cdots \subset I_{n-1,k}^{\mathrm{Vd}} \subset I_{n,k}^{\mathrm{Vd}}\subset I_{n+1,k}^{\mathrm{Vd}} \subset \cdots.$$
The minimal free resolution of $I_{n,k}^{\mathrm{Vd}}$ was given by Yanagawa and Watanabe \cite{WY}.
Their result tells
$\lim_{n \to \infty} |\{j: \beta_{n-k-1,j}(I_{n,k}^{\mathrm{Vd}}) \ne 0\}| = \infty.$
See \cite[Theorem 2.2 and Corollary 2.3]{WY}.
\end{example}

\begin{example}
Consider the symmetric ideal
$$I_n = (x_i^2: i= 1,2,\dots,n) + (x_ix_j -x_kx_l: 1 \leq i < j \leq n, 1\leq k < l \leq n) \subset S_n.$$
The ideal is the defining ideal of the graded M\"obius algebra for a rank $1$ simple matroid \cite{MN,HW}.
The ideal $I_n$ is clearly fixed by the action of $\sym_n$,
and it is not hard to see that $S_n/I_n$ is an Artinian Gorenstein algebra with Hilbert function $(1,n,1)$. So, for any integer $n \geq 2$, the Betti table of $I_n$ has the following form.
\begin{center}
\begin{tabular}{c|cccccccccc}
 & 0 & 1 & 2 & $\cdots$ & $\cdots$ & $n\! -\! 2$ & $n\!-\!1$\\
\hline
2 & $*$ & $*$ & $*$ & $\cdots$ & $\cdots$ & $*$ & \\
3 &  &  &  &  &  &  & $1$
 \end{tabular}
\end{center}
The shape of the Betti table is clearly not a union of line segments.
\end{example}

\noindent
\textbf{Acknowledgments}:
The author thanks Hop Nguyen, Claudiu Raicu and Tim R\"omer for useful comments. The research of the author is partially supported by KAKENHI 16K05102.

\end{document}